\documentclass[12pt]{amsart}
\usepackage[utf8]{inputenc}
\usepackage[T1]{fontenc}
\usepackage[all]{xy}
\usepackage{lipsum}
\usepackage{url}
\usepackage{tikz}
\usepackage{stackrel}
\usepackage{color}

\usepackage{amsmath,amsthm,amssymb}

\usepackage{xcolor}
\usepackage{graphicx}

\newtheorem{theorem}{Theorem}[section]
\newtheorem{lemma}[theorem]{Lemma}
\newtheorem{example}[theorem]{Example}
\newtheorem{proposition}[theorem]{Proposition}
\theoremstyle{definition}
\newtheorem{definition}[theorem]{Definition}

\newtheorem{remark}[theorem]{Remark}
\newtheorem{corollary}[theorem]{Corollary}

\numberwithin{equation}{section}

\setlength{\oddsidemargin}{.25cm}
\setlength{\evensidemargin}{.25cm}
\setlength{\textwidth}{6.3in}
\setlength{\textheight}{8.2in}

\begin{document}


\renewcommand{\bf}{\bfseries}
\renewcommand{\sc}{\scshape}

\title[Borsuk-Ulam property and Sectional Category]%
{Borsuk-Ulam property and Sectional Category}

\author{Cesar A. Ipanaque Zapata}
\address{Departamento de Matem\'atica, IME Universidade de S\~ao Paulo\\
Rua do Mat\~ao 1010 CEP: 05508-090 S\~ao Paulo-SP, Brazil}

\email{cesarzapata@usp.br}
\email{dlgoncal@ime.usp.br}

\author{Daciberg L. Gonçalves} 

\subjclass[2010]{Primary 55M20, 55M30; Secondary 57M10, 55r80, 55R35.}                                    %

\keywords{Borsuk-Ulam theorem, Sectional Category, L-S Category, Configuration spaces, Classifying maps}
\thanks {The first author would like to thank grant\#2016/18714-8 and grant\#2022/03270-8, S\~{a}o Paulo Research Foundation (FAPESP) for financial support. The second  author was partially supported by the FAPESP “Projeto Tem\'atico-FAPESP Topologia Alg\'ebrica, Geom\'etrica e Diferencial” 2016/24707-4 (São Paulo-Brazil).}

\begin{abstract} For a Hausdorff space $X$, a free involution $\tau:X\to X$ and a Hausdorff space $Y$, we discover a connection between the sectional category of the double covers $q:X\to X/\tau$ and $q^Y:F(Y,2)\to D(Y,2)$ from the ordered configuration space $F(Y,2)$ to its unordered quotient $D(Y,2)=F(Y,2)/\Sigma_2$, and the Borsuk-Ulam property (BUP) for the triple $\left((X,\tau);Y\right)$. Explicitly, we demonstrate that the triple $\left((X,\tau);Y\right)$ satisfies the BUP if the sectional category of $q$ is bigger than the sectional category of $q^Y$. This property connects a standard problem in Borsuk-Ulam theory to current research trends in sectional category. As an application of our results, we show that the index of $(X,\tau)$ coincides with the sectional category of the quotient map $q:X\to X/\tau$ minus 1 for any paracompact space $X$. In addition, we present some new results relating  Borsuk-Ulam theory and sectional category. 
\end{abstract}
\maketitle

\section{Introduction}\label{secintro}

\noindent Let $\left((X,\tau);Y\right)$ be a triple where $X$ is a Hausdorff space, $\tau:X\to X$ is a fixed-point free involution and $Y$ is a Hausdorff space. We say that $\left((X,\tau);Y\right)$ \textit{satisfies the Borsuk-Ulam property} (which we shall routinely abbreviate to BUP) if  for every continuous map $f:X\to Y$ there exists a point $x\in X$ such that $f(\tau(x))=f(x)$. 

Let $S^m$ be the $m$-dimensional sphere, $A:S^m\to S^m$ the antipodal involution (i.e., $A(x)=-x$ for any $x\in S^m$) and $\mathbb{R}^n$ the $n$-dimensional Euclidean space. The famous Borsuk-Ulam theorem \cite{borsuk1933} states that for every continuous map $f:S^m\to \mathbb{R}^m$ there exists a point $x\in S^m$ such that $f(-x)=f(x)$, i.e., the triple $\left((S^m,A);\mathbb{R}^m\right)$ satisfies the Borsuk-Ulam property. 

\medskip The study of BUP via sectional category is still non-existent and, in fact, this work takes a first step in this direction. Several examples are presented to illustrate the result arising in this field. We demonstrate that the triple $\left((X,\tau);Y\right)$ satisfies the BUP if $\mathrm{secat}\hspace{.1mm}(q)>\mathrm{secat}\hspace{.1mm}(q^Y)$ (Theorem~\ref{theorem-1}). As a result, we give an alternative proof of the fact that the triple $\left((S^m,A);\mathbb{R}^n\right)$ satisfies the BUP for any $1\leq n\leq m$ (Corollary~\ref{esfere-bup}). Moreover, we show that if $\mathrm{secat}(q:X\to X/\tau)>\text{Emb}(Y)$ then the triple $((X,\tau);Y)$ satisfies the BUP, where $\text{Emb}(M)$ is the smallest dimension of Euclidean spaces in which $M$ can be embedded (Proposition~\ref{embed}). For any planar graph $\Gamma$ such that $F(\Gamma,2)$ is path-connected, we show that the triple $\left((S^m,A);\Gamma\right)$ satisfies the BUP for any $m\geq 2$ (Example~\ref{planar-graph}). In addition, we study two natural generalizations of the Borsuk-Ulam theorem as follows.

The first natural generalization of the Borsuk-Ulam theorem consists in replacing $\mathbb{R}^n$ by a Hausdorff space $Y$, and then to ask which triples $\left((S^m,A);Y\right)$ satisfy the BUP. We obtain that if $\mathrm{secat}\left(q^Y:F(Y,2)\to D(Y,2) \right)<m+1$ then the triple $\left((S^m,A);Y\right)$ satisfies the BUP (Example~\ref{bup-sphere-y}). For $Y$ a path-connected topological manifold (without boundary), if $\dim(Y)\leq\dfrac{m}{2}$ we show that the triple $\left((S^{m+1},A);Y\right)$ satisfies the BUP (Proposition~\ref{bup-dim}). In particular, in Example~\ref{surface}, we show that the triple $\left((S^m,A);\Sigma\right)$ satisfies the BUP for any $m\geq 5$ and any connected surface $\Sigma$.

 The second natural generalization of the Borsuk-Ulam theorem consists in replacing $S^m$ by a connected, $m$-dimensional CW complex $M^m$ and $A$ by a free cellular involution $\tau$ defined on $M^m$, and then to ask which triples $\left((M^m,\tau);\mathbb{R}^n\right)$ satisfy the BUP. From \cite[Lemma 2.4]{goncalves2010} or Lemma~\ref{n-n+1}, if $n>m$ the BUP does not hold for $\left((M^m,\tau);\mathbb{R}^n\right)$. A major problem is to find the greatest $n\leq m$ such that the BUP holds for a specific $(M^m,\tau)$. Such greatest integer $n$ is known as the \textit{index} of $\tau$ on $M^m$ (Definition~\ref{defn-index}). We present a new lower bound for the index in terms of sectional category. Indeed, we demonstrate that the index of $\tau$ on $X$ is at least $\mathrm{secat}(q:X\to X/\tau)-1$ (Theorem~\ref{lower-bound-index}). This lower bound can be achieved. Corollary~\ref{m+1-m} shows that the index of $\tau$ on $M^{m}$ is $m$ when $\mathrm{secat}(q)=m+1$. Moreover, Proposition~\ref{m-1} shows that the index of $\tau$ on $M^m$ is $m-1$ when $\mathrm{secat}(q:M^m\to M^m/\tau)=m$. We show that the index of $(X,\tau)$ always coincides with the sectional category of the quotient map $q:X\to X/\tau$ minus 1 for any paracompact space $X$ (Theorem~\ref{thm:sec-index-one}). As an application of the  results, we present some new results relating  Borsuk-Ulam theory and sectional category. 

\medskip The paper is organized as follows: In Section \ref{bu}, we recall the notion of Borsuk-Ulam property. A key result in this paper is a topological characterisation for the BUP (it is given in Proposition~\ref{top-bup}) together with Remark~\ref{bup-pullback}. In Section \ref{sn}, we begin by recalling the notions of sectional category, L-S category, category of maps and basic results about these numerical invariants. We show that the equality $\mathrm{secat}(p\times 1_Z)=\mathrm{secat}(p)$ holds for any fibration $p:E\to B$ and any topological space $Z$ (Proposition~\ref{secat-produc}). In \cite{roth2008}, the author shows that $\mathrm{secat}(q^{\mathbb{R}^n})=n$ for any $n\geq 1$. We present an alternative proof of this fact (see Lemma~\ref{conf-eu}). In this section we study the BUP property for the triple $\left((X,\tau);Y\right)$ via sectional category. In particular, we demonstrate that the triple $\left((X,\tau);Y\right)$ satisfies the BUP if $\mathrm{secat}\hspace{.1mm}(q)>\mathrm{secat}\hspace{.1mm}(q^Y)$ (Theorem~\ref{theorem-1}). We give several examples which extend know results about the BUP. In particular, we recover the famous Borsuk-Ulam theorem (Corollary~\ref{esfere-bup}). In addition, in Lemma~\ref{n-n+1}, for connected topological manifolds $X$ and $Y$ with dimension $n$ and $n+1$, respectively (with $n\geq 1$); and a fixed-point free involution $\tau:X\to X$, we show that the triple $((X,\tau);Y)$ does not satisfy the BUP. As another application of our result, we present a new lower bound for the index in terms of sectional category (Theorem~\ref{lower-bound-index}). Furthermore, Theorem~\ref{thm:sec-index-one} shows that this lower bound can be achieved. In addition, we determine  the sectional category of a double covering $S_1 \to S_2$ between any two closed surfaces (Corollary~\ref{cor}).

\medskip The authors of this paper deeply thank the referees for very valuable comments and timely corrections on previous versions of the work.
\section{Borsuk-Ulam theory revisited}\label{bu}

\noindent Let $\left((X,\tau);Y\right)$ be a triple where $X$ is a Hausdorff space, $\tau:X\to X$ is a fixed-point free involution and $Y$ is a Hausdorff space. We say that $\left((X,\tau);Y\right)$ \textit{satisfies the Borsuk-Ulam property} (which we shall routinely abbreviate to BUP) if  for every continuous map $f:X\to Y$ there exists a point $x\in X$ such that $f(\tau(x))=f(x)$. 

Let $S^m$ be the $m$-dimensional sphere, $A:S^m\to S^m$ the antipodal involution 
 and $\mathbb{R}^n$ the $n$-dimensional Euclidean space. The famous Borsuk-Ulam theorem states that for every continuous map $f:S^m\to \mathbb{R}^m$ there exists a point $x\in S^m$ such that $f(x)=f(-x)$ \cite{borsuk1933}.
 
 \begin{remark}\label{subset}
Note that if $\left((X,\tau);Y\right)$ satisfies the BUP then the triple $\left((X,\tau);Z\right)$ also satisfies the BUP for any nonempty subspace $Z\subset Y$.
\end{remark}

A natural generalization of the Borsuk-Ulam theorem consists in replacing $S^m$ by a connected, $m$-dimensional CW complex $M^m$ and $A$ by a free cellular involution $\tau$ defined on $M^m$, and then to ask which triples $\left((M^m,\tau);\mathbb{R}^n\right)$ satisfy the BUP. From \cite[Lemma 2.4]{goncalves2010}, if $n>m$ the BUP does not hold for $\left((M^m,\tau);\mathbb{R}^n\right)$. A major problem is to find the greatest $n\leq m$ such that the BUP holds for a specific $(M^m,\tau)$. 

\begin{definition}\label{defn-index} Let  $(X, \tau)$ be a $\mathbb{Z}_2$-space. Following \cite[Definition 5.3.1]{mat}  
the $\mathbb{Z}_2$-index of $(X,\tau)$ is defined by:
$$\text{ind}_{\mathbb{Z}_2}(X):= \min\{n \in  \{0, 1, 2, \ldots \}: X\stackrel{\mathbb{Z}_2} {\to}  
 S^n\}.$$
\end{definition}

From \cite[Propostion 2.2]{goncalves2010} follows that the greatest $n$ such that 
 $\left((M^m,\tau);\mathbb{R}^n\right)$ satisfy the BUP coincides with $\text{ind}_{\mathbb{Z}_2}(M^m)$, 
 where the $\mathbb{Z}_2$ action is given by $\tau$.

We will fix some notation that will be used throughout the paper. The \textit{ordered configuration space} of $2$ distinct points on $Y$ (see \cite{fadell1962configuration}) is the topological space \[F(Y,2)=\{(y_1,y_2)\in Y\times Y:~y_1\neq y_2\}\] topologised as a subspace of the Cartesian power $Y\times Y$. Consider the double cover $q^Y:F(Y,2)\to D(Y,2)$ from the ordered configuration space $F(Y,2)$ to its unordered quotient $D(Y,2)=F(Y,2)/\Sigma_2$ given by the obvious action $\tau_2$ of the symmetric group $\Sigma_2$ on $2$ letters. Note that the existence of a free action of $\mathbb{Z}_2$ on $X$ is equivalent to that of a fixed-point free involution $\tau:X\to X$. In this case, with $X$ Hausdorff, the quotient map $q:X\to X/\tau$ is a $2$-sheeted covering map. Recall that a covering map is a locally trivial bundle whose fiber is a discrete space \cite[Example 4.5.3, pg. 126]{aguilar2002} and thus the quotient map $q:X\to X/\tau$ is a principal fibering in the sense of Schwarz \cite[Pg. 59]{schwarz1966}.  

It is easy to check the following topological criterion for the BUP (c.f. \cite[Lemma 5]{daciberg2019}). 

\begin{proposition}\label{top-bup} 
The triple $\left((X,\tau);Y\right)$ does not satisfy the BUP if and only if there exists a $\mathbb{Z}_2$-equivariant continuous map $\varphi:X\to F(Y,2)$ such that the following diagram commutes
\begin{eqnarray*}
\xymatrix@C=2cm{ 
       X  \ar[r]^{\varphi}\ar[d]_{q} &  F(Y,2)\ar[d]^{q^Y} &\\
      X/\tau \ar[r]_{\overline{\varphi}} & D(Y,2) &
       }
\end{eqnarray*} where $q:X\to X/\tau$ and $q^Y:F(Y,2)\to D(Y,2)$ are the $2$-sheeted covering maps, and $\overline{\varphi}$ is induced by $\varphi$ in the quotient spaces. 
\end{proposition}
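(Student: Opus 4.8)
The plan is to prove the equivalence by setting up a direct correspondence between continuous maps $f:X\to Y$ that witness the failure of the BUP and $\mathbb{Z}_2$-equivariant maps $\varphi:X\to F(Y,2)$. Unwinding the definition, the triple $\left((X,\tau);Y\right)$ fails the BUP precisely when there is a continuous map $f:X\to Y$ with $f(\tau(x))\neq f(x)$ for every $x\in X$, and this is exactly the datum I intend to convert into such a $\varphi$ and back.

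For the forward implication I would start from such an $f$ and define $\varphi:X\to Y\times Y$ by $\varphi(x)=\left(f(x),f(\tau(x))\right)$. The condition $f(\tau(x))\neq f(x)$ guarantees that $\varphi$ actually lands in $F(Y,2)$, and continuity is immediate since $f$ and $\tau$ are continuous. The equivariance is the only genuine computation: using $\tau^2=\mathrm{id}_X$ one obtains $\varphi(\tau(x))=\left(f(\tau(x)),f(x)\right)=\tau_2(\varphi(x))$, so $\varphi$ intertwines $\tau$ with the coordinate-swapping involution $\tau_2$ on $F(Y,2)$.

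For the reverse implication I would write an arbitrary equivariant $\varphi$ in coordinates as $\varphi=(\varphi_1,\varphi_2)$ with $\varphi_1,\varphi_2:X\to Y$ continuous, and set $f:=\varphi_1$. The equivariance relation $\varphi(\tau(x))=\tau_2(\varphi(x))$ yields $\varphi_1(\tau(x))=\varphi_2(x)$, while the hypothesis $\varphi(x)\in F(Y,2)$ forces $\varphi_1(x)\neq\varphi_2(x)$; combining these gives $f(\tau(x))=\varphi_2(x)\neq\varphi_1(x)=f(x)$ for all $x\in X$, so the triple fails the BUP.

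In both directions the commutativity of the displayed square is automatic and requires no separate argument: a $\mathbb{Z}_2$-equivariant map between $\mathbb{Z}_2$-spaces descends to a unique continuous map $\overline{\varphi}$ on the orbit spaces by the universal property of the quotient map $q$, and by construction this $\overline{\varphi}$ fits into the square with $q$ and $q^Y$. Consequently there is no serious obstacle here; the only point demanding care is the bookkeeping of the two involutions, namely verifying equivariance through $\tau^2=\mathrm{id}_X$ and extracting the correct coordinate of $\varphi$ to recover the map $f$ that defeats the BUP.
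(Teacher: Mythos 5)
Your proposal is correct and follows essentially the same route as the paper's own proof: defining $\varphi(x)=\left(f(x),f(\tau(x))\right)$ in one direction and extracting $f=\varphi_1$ from the coordinates of an equivariant $\varphi$ in the other, with the same equivariance computation. The only (harmless) difference is that you make explicit the use of $\tau^2=\mathrm{id}_X$ and the universal property of the quotient for the descended map $\overline{\varphi}$, which the paper leaves implicit.
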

\begin{proof}
Suppose first that $\left((X,\tau);Y\right)$ does not satisfy the BUP. Then there exists a map $f:X\to Y$ such that $f(x)\neq f(\tau(x))$ for all $x\in X$. Define the map $\varphi:X\to F(Y,2)$ by $\varphi(x)=(f(x),f(\tau (x)))$. Note that $\varphi$ is $\mathbb{Z}_2$-equivariant, and so induces a map $\overline{\varphi}$ of the corresponding quotient spaces. Moreover, the equality $\overline{\varphi}\circ q=q^Y\circ \varphi$ holds.  

We now prove the converse. Suppose that there exists a such $\mathbb{Z}_2$-equivariant map $\varphi:X\to F(Y,2)$. Let $\varphi=(\varphi_1,\varphi_2)$, and note that $\varphi(\tau(x))=(\varphi_2(x),\varphi_1(x))$ for all $x\in X$, and so $\varphi_1(\tau(x))=\varphi_2(x)$ for all $x\in X$. Then $\varphi_1:X\to Y$ is a map with $\varphi_1(x)\neq  \varphi_2(x)=\varphi_1(\tau(x))$ for all $x\in X$, and we have that $\left((X,\tau);Y\right)$ does not satisfy the BUP. 
\end{proof}

From \cite[Pg. 61]{schwarz1966} we have the following remark. 

\begin{remark}\label{bup-pullback} Let $X, Y$ be Hausdorff spaces. Suppose that $X$ admits a fixed-point free involution $\tau$. Note that, any commutative diagram in the form  \begin{eqnarray*}
\xymatrix{ 
       X  \ar[r]^{\varphi}\ar[d]_{q} &  F(Y,2)\ar[d]^{q^Y} &\\
      X/\tau \ar[r]_{\overline{\varphi} } & D(Y,2) &
       }
\end{eqnarray*} where $\varphi:X\to F(Y,2)$ is a continuous $\mathbb{Z}_2$-equivariant map and $\overline{\varphi}$ is induced by $\varphi$ in the quotient spaces, is a pullback since $\varphi$ restricts to a homeomorphism on each fiber (in this case both $q:X\to X/\tau$ and $q^Y:F(Y,2)\to D(Y,2)$ are $2$-sheeted covering maps).
\end{remark}

\section{Sectional category}\label{sn}
In this section we begin by recalling the notion of sectional category together with basic results about this numerical invariant. We shall follow the terminology in \cite{zapata2020}. If $f$ is homotopic to $g$ we shall denote by $f\simeq g$. The map $1_Z:Z\to Z$ denotes the identity map. Fibrations are taken in the Hurewicz sense. 

\medskip Let $p:E\to B$ be a fibration.  A \textit{cross-section} or \textit{section} of $p$ is a right inverse of $p$, i.e., a map $s:B\to E$, such that $p\circ s = 1_B$ . Moreover, given a subspace $A\subset B$, a \textit{local section} of $p$ over $A$ is a section of the restriction map $p_|:p^{-1}(A)\to A$, i.e., a map $s:A\to E$, such that $p\circ s$ is the inclusion $A\hookrightarrow B$.

\medskip We recall the following definition, see \cite{schwarz1958genus} or \cite{schwarz1966}.
\begin{definition}
   The \textit{sectional category} of $p$, called originally by Schwarz genus of $p$, and denoted by $\mathrm{secat}\hspace{.1mm}(p),$ is the minimal cardinality of open covers of $B$, such that each element of the cover admits a continuous local section to $p$. We set $\mathrm{secat}\hspace{.1mm}(p)=\infty$ if no such finite cover exists.
\end{definition}

For a commutative ring $R$ and a proper ideal $S\subset R$, the \textit{nilpotency index} of $S$ is given by \[\text{nil}(S)=\min\{k:~\text{ any product of $k$ elements of $S$ is trivial}\}.\] Note that, $\text{nil}(S)$ coincides with $n+1$, where $n$ is the maximum number of factors in a nonzero product of elements from $S$.

The following statement gives a lower bound in terms of any multiplicative cohomology (see \cite[Proposiç\~{a}o 4.3.17-(3), pg. 138]{zapata2022}).

\begin{lemma}\label{prop-sectional-category}
Let $h^\ast$ be a multiplicative cohomology theory and $p:E\to B$ be a fibration, then \[\mathrm{secat}\hspace{.1mm}(p)\geq \mathrm{nil}(\mathrm{Ker}(p^\ast)),\] where $p^\ast:h^\ast(B)\to h^\ast(E)$ is the induced homomorphism in cohomology.
\end{lemma}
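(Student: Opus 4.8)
The plan is to prove the cohomological lower bound $\mathrm{secat}(p) \geq \mathrm{nil}(\mathrm{Ker}(p^\ast))$ by contradiction, exploiting the fact that local sections trivialize the induced cohomology homomorphism on the corresponding open sets. Suppose $\mathrm{secat}(p) = k$, so that $B$ admits an open cover $U_1, \ldots, U_k$ where each $U_i$ carries a local section $s_i : U_i \to E$ of $p$. The key observation is that on each $U_i$, the composite $p \circ s_i$ is the inclusion $\iota_i : U_i \hookrightarrow B$, so at the level of cohomology the restriction $\iota_i^\ast : h^\ast(B) \to h^\ast(U_i)$ factors as $s_i^\ast \circ p^\ast$. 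Consequently, for any class $u \in \mathrm{Ker}(p^\ast)$ we have $\iota_i^\ast(u) = s_i^\ast(p^\ast(u)) = s_i^\ast(0) = 0$, meaning every kernel class restricts to zero on each $U_i$.

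The second step is to convert this vanishing-on-each-piece statement into a statement about products. The standard tool is the long exact sequence of the pair $(B, U_i)$: since $\iota_i^\ast(u) = 0$, the class $u$ lifts to a relative class $\tilde{u}_i \in h^\ast(B, U_i)$ whose image under the map $h^\ast(B, U_i) \to h^\ast(B)$ is $u$. I would then take $k$ classes $u_1, \ldots, u_k \in \mathrm{Ker}(p^\ast)$, lift each $u_i$ to a relative class $\tilde{u}_i \in h^\ast(B, U_i)$, and form the relative cup product $\tilde{u}_1 \smile \cdots \smile \tilde{u}_k \in h^\ast(B, U_1 \cup \cdots \cup U_k) = h^\ast(B, B) = 0$. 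Here I use the multiplicativity of $h^\ast$, specifically the relative cup product pairing $h^\ast(B, A) \otimes h^\ast(B, A') \to h^\ast(B, A \cup A')$. The image of this relative product in $h^\ast(B)$ is precisely the ordinary product $u_1 \smile \cdots \smile u_k$, by naturality of cup products with respect to the maps $h^\ast(B, A) \to h^\ast(B)$.

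Combining the two steps: since the relative product lives in $h^\ast(B, B) = 0$, its image $u_1 \smile \cdots \smile u_k$ in $h^\ast(B)$ must vanish. Thus any product of $k = \mathrm{secat}(p)$ elements of $\mathrm{Ker}(p^\ast)$ is zero, which by definition of nilpotency index gives $\mathrm{nil}(\mathrm{Ker}(p^\ast)) \leq k = \mathrm{secat}(p)$, as required. When $\mathrm{secat}(p) = \infty$ the inequality is trivial.

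The main obstacle I anticipate is the careful bookkeeping of the relative cup product and verifying that the union of the open sets $U_1 \cup \cdots \cup U_k$ really is all of $B$ so that $h^\ast(B, U_1 \cup \cdots \cup U_k)$ is the cohomology of $(B, B)$, which vanishes in positive degrees; one must also confirm that the relative classes can be chosen compatibly and that the iterated relative cup product is well-defined in the chosen multiplicative cohomology theory. This is the standard Schwarz genus argument, so rather than reconstruct the relative product machinery in full I would cite the cited reference \cite[Proposiç\~{a}o 4.3.17-(3), pg. 138]{zapata2022} for the technical details, presenting the argument above as the conceptual skeleton.
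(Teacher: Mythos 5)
Your argument is correct: it is the standard Schwarz-genus proof via the factorization $\iota_i^\ast=s_i^\ast\circ p^\ast$, lifting each kernel class to a relative class in $h^\ast(B,U_i)$, and killing the $k$-fold relative cup product in $h^\ast(B,U_1\cup\cdots\cup U_k)=h^\ast(B,B)=0$. The paper gives no proof of this lemma, deferring entirely to the cited reference, and your sketch is precisely the argument that reference (and Schwarz's original work) carries out, so there is nothing to reconcile beyond the technical care you already flag about the relative product.
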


\begin{remark}
Lemma~\ref{prop-sectional-category} implies that if there exist cohomology classes $\alpha_1,\ldots,\alpha_k\in h^\ast(B)$ with $p^\ast(\alpha_1)=\cdots=p^\ast(\alpha_k)=0$ and $\alpha_1\cup\cdots\cup \alpha_k\neq 0$, then $\mathrm{secat}\hspace{.1mm}(p)\geq k+1$. In this paper we will use Lemma~\ref{prop-sectional-category} for $h^\ast$ as being the singular cohomology with any coefficient ring (as was presented by James in \cite[pg. 342]{james1978}).
\end{remark} 

\medskip\noindent Now, note that, if the following diagram

\begin{eqnarray*}
\xymatrix{ E^\prime \ar[rr]^{\,\,} \ar[dr]_{p^\prime} & & E \ar[dl]^{p}  \\
        &  B & }
\end{eqnarray*}
commutes up homotopy, then $\mathrm{secat}\hspace{.1mm}(p^\prime)\geq \mathrm{secat}\hspace{.1mm}(p)$ (see \cite[Proposition 6, pg. 70]{schwarz1966}). Also, from \cite[Proposition 7, pg. 71]{schwarz1966}, for any fibration $p:E\to B$ and any  continuous map $f:B^\prime\to B$, note that any local section $s:U\to E$ of $p:E\to B$ induces a local section of the canonical pullback $f^\ast p:B^\prime\times_B E\to B^\prime$, called \textit{the local pullback section} $f^\ast(s):f^{-1}(U)\to B^\prime\times_B E$, simply by defining \[f^\ast(s)(b^\prime)=\left(b^\prime,\left(s\circ f\right)(b^\prime)\right).\]

\begin{eqnarray*}
\xymatrix{ &B^\prime\times_B E \ar[rr]^{ } \ar[d]_{f^\ast p} & & E \ar[d]^{p} & \\
       &B^\prime  \ar[rr]_{f} & &  B & \\
       f^{-1}(U)\ar@{^{(}->}[ru]_{}\ar[rr]_{f}\ar@{-->}@/^10pt/[ruu]^{f^\ast(s)} & &U\ar@{^{(}->}[ru]_{}\ar@{-->}@/^10pt/[ruu]^{s} & & }
\end{eqnarray*} Thus, \begin{align}\label{ineq-canonical}
    \text{secat}(f^\ast p)&\leq \text{secat}(p).
\end{align} 

Before stating Lemma~\ref{pullback}, let us present a key concept which we call  {\it  quasi pullback}. 

\begin{definition}
 By a \textit{quasi pullback} we mean a strictly commutative diagram
\begin{eqnarray*}
\xymatrix{ \rule{3mm}{0mm}& X^\prime \ar[r]^{\varphi'} \ar[d]_{f^\prime} & X \ar[d]^{f} & \\ &
       Y^\prime  \ar[r]_{\,\,\varphi} &  Y &}
\end{eqnarray*} 
such that, for any strictly commutative diagram as the one on the left hand-side of~(\ref{diagramadoble}), there exists a (not necessarily unique) continuous map $h:Z\to X^\prime$ that renders a strictly commutative diagram as the one on the right hand-side of~(\ref{diagramadoble}). 
\begin{eqnarray}\label{diagramadoble}
\xymatrix{
Z \ar@/_10pt/[dr]_{\alpha} \ar@/^30pt/[rr]^{\beta} & & X \ar[d]^{f}  & & &
Z\rule{-1mm}{0mm} \ar@/_10pt/[dr]_{\alpha} \ar@/^30pt/[rr]^{\beta}\ar[r]^{h} & 
X^\prime \ar[r]^{\varphi'} \ar[d]_{f^\prime} & X \\
& Y^\prime  \ar[r]_{\,\,\varphi} &  Y & & & & Y^\prime &  \rule{3mm}{0mm}}
\end{eqnarray}   
\end{definition}

Note that such a condition amounts to saying that $X'$ contains the canonical pullback $Y'\times_Y X$ determined by $f$ and $\varphi$ as a retract in a way that is compatible with the mappings into $X$ and $Y'$.

Then we have the following statement.

\begin{lemma}\label{pullback}
Let $p:E\to B$ be a fibration. If the following square
\begin{eqnarray*}
\xymatrix{ E^\prime \ar[r]^{\,\,} \ar[d]_{p^\prime} & E \ar[d]^{p} & \\
       B^\prime  \ar[r]_{\,\, f} &  B &}
\end{eqnarray*}
is a quasi pullback, then $\mathrm{secat}\hspace{.1mm}(p^\prime)\leq \mathrm{secat}\hspace{.1mm}(p)$. 
\end{lemma}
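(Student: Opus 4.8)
The plan is to exploit the defining property of a quasi pullback to transfer local sections from $B$ to $B'$. Concretely, suppose $\mathrm{secat}(p)=k$, so there is an open cover $B=U_1\cup\cdots\cup U_k$ where each $U_i$ admits a local section $s_i:U_i\to E$ of $p$, meaning $p\circ s_i$ is the inclusion $U_i\hookrightarrow B$. Since $f:B'\to B$ is continuous, the preimages $V_i:=f^{-1}(U_i)$ form an open cover of $B'$ with the same number $k$ of elements. The goal is to produce, for each $i$, a local section $\sigma_i:V_i\to E'$ of $p':E'\to B'$; this would immediately give $\mathrm{secat}(p')\leq k=\mathrm{secat}(p)$.

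The key step is constructing $\sigma_i$ by applying the quasi pullback hypothesis to a suitable test diagram over $Z=V_i$. First I would restrict the quasi pullback square to the open set $V_i$: set $\alpha:V_i\to B'$ to be the inclusion $V_i\hookrightarrow B'$, and set $\beta:V_i\to E$ to be the composite $s_i\circ (f|_{V_i})$, where $f|_{V_i}:V_i\to U_i$ is the restriction (well-defined since $V_i=f^{-1}(U_i)$). I must check that the outer square commutes, i.e.\ that $f\circ\alpha=\varphi\circ\beta$ in the notation of the definition, which here reads $p\circ\beta=f\circ\alpha$. Indeed $p\circ\beta=p\circ s_i\circ f|_{V_i}=(\text{incl}_{U_i})\circ f|_{V_i}=f\circ\text{incl}_{V_i}=f\circ\alpha$, using that $s_i$ is a section of $p$ over $U_i$. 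By the quasi pullback property there exists a continuous map $h:V_i\to E'$ making the right-hand diagram of~(\ref{diagramadoble}) commute, so in particular $p'\circ h=\alpha=\text{incl}_{V_i}$. Setting $\sigma_i:=h$ then gives a local section of $p'$ over $V_i$, as required.

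The main obstacle to watch is bookkeeping of which arrow plays which role in the abstract quasi pullback definition, since the definition is stated for a general square with corners $X',X,Y',Y$ and maps $f,f',\varphi,\varphi'$, whereas here $E\to B$ is the fibration $p$ and $E'\to B'$ is $p'$. One must orient the matching carefully: the vertical map on the right of the quasi pullback is $f$ (here $p$), the bottom map is $\varphi$ (here $f$), and the induced conclusion $p'\circ h=\alpha$ is exactly the section condition. A secondary point is that the conclusion $\mathrm{secat}(p')\leq k$ is vacuous when $\mathrm{secat}(p)=\infty$, so one only needs the argument for finite $k$; and the continuity of $\beta$ and of $h$ is automatic from the continuity of $s_i$, $f$, and the hypothesis. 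No homotopy-theoretic input is needed beyond the purely set-theoretic covering argument, so I expect the proof to be short once the diagram is correctly set up.
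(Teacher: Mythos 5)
Your proof is correct, but it takes a genuinely different (and more elementary) route than the paper's. The paper first shows $\mathrm{secat}(p^\prime)=\mathrm{secat}(f^\ast p)$ by producing two factorizations over $B^\prime$ --- one from the quasi-pullback lifting property applied to the canonical pullback $B^\prime\times_B E$, and one from the universal property of that canonical pullback --- and then concludes by the previously established inequality~(\ref{ineq-canonical}), $\mathrm{secat}(f^\ast p)\leq\mathrm{secat}(p)$. You instead bypass the canonical pullback entirely: you take an open cover $U_1,\ldots,U_k$ of $B$ with local sections $s_i$, and for each $i$ apply the quasi-pullback property with test object $Z=f^{-1}(U_i)$, $\alpha$ the inclusion and $\beta=s_i\circ f|_{V_i}$, obtaining $h$ with $p^\prime\circ h=\alpha$, i.e.\ a local section of $p^\prime$ over $f^{-1}(U_i)$. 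Your verification of the outer commutativity $p\circ\beta=f\circ\alpha$ and your matching of the arrows with the abstract definition are both correct, and the empty or infinite cases are handled properly. What your argument buys is self-containedness (it inlines the local pullback section construction of~(\ref{ineq-canonical}) and needs no factorization criterion); what the paper's argument buys is the slightly stronger intermediate statement $\mathrm{secat}(p^\prime)=\mathrm{secat}(f^\ast p)$, recording that a quasi pullback has the same sectional category as the honest pullback, not merely one bounded by $\mathrm{secat}(p)$.
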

\begin{proof}
Since $p'$ is a quasi pullback, we have the following commutative triangle
\begin{eqnarray*}
\xymatrix{ B^\prime\times_B E \ar[rr]^{\,\,} \ar[dr]_{f^\ast p} & & E^\prime \ar[dl]^{p^\prime}  \\
        &  B^\prime & }
\end{eqnarray*} and thus $\text{secat}(f^\ast p)\geq\text{secat}(p')$. Similarly, since $f^\ast p$ is the canonical pullback, we have the following commutative triangle
\begin{eqnarray*}
\xymatrix{ E^\prime \ar[rr]^{\,\,} \ar[dr]_{p^\prime} & &  B^\prime\times_B E\ar[dl]^{f^\ast p}  \\
        &  B^\prime & }
\end{eqnarray*} and thus $\text{secat}(p')\geq\text{secat}(f^\ast p)$. Hence, the equality $\text{secat}(p')=\text{secat}(f^\ast p)$ holds. By the inequality~(\ref{ineq-canonical}), we obtain $\mathrm{secat}\hspace{.1mm}(p^\prime)\leq \mathrm{secat}\hspace{.1mm}(p)$.
\end{proof}

In Example~\ref{example-final} we will use the equality $\mathrm{secat}(p\times 1_Z)=\mathrm{secat}(p)$ which holds for any fibration $p:E\to B$ and any topological space $Z$. For that reason, we present a proof to it.

\begin{proposition}\label{secat-produc}
Let $p:E\to B$ be a fibration and $Z$ be a topological space, then \[\mathrm{secat}(p\times 1_Z)=\mathrm{secat}(p).\]
\end{proposition}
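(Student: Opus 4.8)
The plan is to prove the two inequalities $\mathrm{secat}(p\times 1_Z)\leq \mathrm{secat}(p)$ and $\mathrm{secat}(p)\leq \mathrm{secat}(p\times 1_Z)$ separately, where I may assume $Z\neq\emptyset$ so that a point $z_0\in Z$ is available for the second step (the empty case being degenerate). Throughout I will use that the product $p\times 1_Z:E\times Z\to B\times Z$ is again a fibration.

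For the inequality $\mathrm{secat}(p\times 1_Z)\leq \mathrm{secat}(p)$ I would start from an open cover $U_1,\ldots,U_k$ of $B$ realizing $k=\mathrm{secat}(p)$, together with local sections $s_i:U_i\to E$ of $p$. Then $\{U_i\times Z\}_{i=1}^{k}$ is an open cover of $B\times Z$ of the same cardinality, and $s_i\times 1_Z:U_i\times Z\to E\times Z$ is a local section of $p\times 1_Z$ over $U_i\times Z$, since $(p\times 1_Z)\circ(s_i\times 1_Z)=(p\circ s_i)\times 1_Z$ is the inclusion $U_i\times Z\hookrightarrow B\times Z$. This immediately yields $\mathrm{secat}(p\times 1_Z)\leq k$.

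For the reverse inequality the idea is to exhibit $p$ as a canonical pullback of $p\times 1_Z$ and then invoke the inequality~(\ref{ineq-canonical}). Fixing $z_0\in Z$, consider the inclusion $j:B\to B\times Z$ given by $j(b)=(b,z_0)$. A direct computation shows that the canonical pullback $j^\ast(p\times 1_Z):B\times_{B\times Z}(E\times Z)\to B$ is homeomorphic to $p:E\to B$, the homeomorphism identifying the pullback point $(b,(e,z_0))$ with $p(e)=b$ with the point $e\in E$. Applying~(\ref{ineq-canonical}) to the fibration $p\times 1_Z$ and the map $j$ then gives $\mathrm{secat}(p)=\mathrm{secat}(j^\ast(p\times 1_Z))\leq \mathrm{secat}(p\times 1_Z)$, and combining the two inequalities produces the claimed equality.

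The only step that requires a little care is the identification of $j^\ast(p\times 1_Z)$ with $p$, and I do not expect this to pose a genuine obstacle, since the product structure makes the fiber product collapse onto the slice $B\times\{z_0\}$. If one prefers to bypass the pullback formalism, the same conclusion can be reached by a direct argument: given an open cover $\{W_j\}$ of $B\times Z$ with local sections $\sigma_j:W_j\to E\times Z$, one restricts each $\sigma_j$ to the slice $B\times\{z_0\}$ and post-composes with the projection $E\times Z\to E$, obtaining a local section of $p$ over the open set $\{b\in B:(b,z_0)\in W_j\}$; these sets cover $B$ with cardinality at most that of $\{W_j\}$, which again gives $\mathrm{secat}(p)\leq \mathrm{secat}(p\times 1_Z)$.
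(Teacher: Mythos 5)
Your proposal is correct and follows essentially the same route as the paper: the easy inequality via the product sections $s_i\times 1_Z$ over $U_i\times Z$, and the reverse inequality by pulling $p\times 1_Z$ back along the slice inclusion $j_{z_0}(b)=(b,z_0)$ (the paper phrases this via its quasi-pullback Lemma~\ref{pullback}, you via the canonical pullback and inequality~(\ref{ineq-canonical}), but these are the same mechanism). Your identification of $j_{z_0}^\ast(p\times 1_Z)$ with $p$ is correct, and your remark that $Z\neq\emptyset$ is needed is a fair (if minor) point the paper leaves implicit.
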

\begin{proof}
Note that, if $s:U\to E$ is a section to $p$, then the product $s\times 1_Z:U\times Z\to E\times Z$ is a section to $p\times 1_Z$, and thus, $\mathrm{secat}(p\times 1_Z)\leq\mathrm{secat}(p)$. The other inequality follows from the fact that the square 
\begin{eqnarray*}
\xymatrix@C=2cm{ E \ar[r]^{j_{z_0}} \ar[d]_{p} & E\times Z \ar[d]^{p\times 1_Z} & \\
       B \ar[r]_{j_{z_0}} &  B\times Z &}
\end{eqnarray*} where $j_{z_0}(-)=(-,z_0)$ is the natural inclusion ($z_0\in Z$), 
is a quasi-pullback together with Lemma~\ref{pullback}. 
\end{proof}

Next, we recall the notion of LS category which, in our setting, is one bigger than the one given in \cite[Definition 1.1, pg.1]{cornea2003lusternik}. 

\begin{definition}
The \textit{Lusternik-Schnirelmann category} (L-S category) or category of a topological space $X$, denoted by cat$(X)$, is the least integer $m$ such that $X$ can be covered by $m$ open sets, all of which are contractible within $X$. We set $\text{cat}(X)=\infty$ if no such $m$ exists.
\end{definition}

We have $\text{cat}(X)=1$ iff $X$ is contractible. The L-S category is a homotopy invariant, i.e., if $X$ is homotopy equivalent to $Y$ (which we shall denote by $X\simeq Y$), then $\text{cat}(X)=\text{cat}(Y)$. Furthermore, the invariant satisfies the following properties.

\begin{lemma}\label{cat-stimates}
\noindent
\begin{enumerate}
    \item \cite[Proposition 5.1, pg. 336]{james1978} If $X$ is a $(q-1)$-connected CW complex ($q\geq 1$), then \[\mathrm{cat}(X)\leq \dfrac{\mathrm{hdim}(X)}{q}+1,\]  where $\mathrm{hdim}(X)$ denotes the homotopical dimension of $X$, i.e., the minimal dimension of CW complexes having the homotopy type of $X$. 
    \item \cite[Proposiç\~{a}o 4.1.34, pg. 108]{zapata2022} We have $$\mathrm{cat}(X)\geq \mathrm{nil}\left(\widetilde{h}^\ast(X)\right),$$ where $\widetilde{h}^\ast(X)$ is any multiplicative reduced cohomology theory.
\end{enumerate}
\end{lemma}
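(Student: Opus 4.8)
The plan is to prove the two parts by different methods: part (2) is a cup-length obstruction extracted from an optimal categorical cover, while part (1) is a dimension-versus-connectivity estimate that I would derive from the Ganea fibrations together with elementary obstruction theory for sections.

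For part (2), I would write $m=\mathrm{cat}(X)$ and fix an open cover $X=U_1\cup\cdots\cup U_m$ in which each $U_i$ is contractible within $X$, so that every inclusion $j_i\colon U_i\hookrightarrow X$ is null-homotopic. Given classes $u_1,\dots,u_m\in\widetilde{h}^\ast(X)$, the null-homotopy of $j_i$ forces $j_i^\ast(u_i)=0$, whence $u_i$ admits a relative lift $\tilde u_i\in h^\ast(X,U_i)$. Multiplicativity of $h^\ast$ then supplies a relative cup product
\[
\tilde u_1\cup\cdots\cup\tilde u_m\in h^\ast(X,\,U_1\cup\cdots\cup U_m)=h^\ast(X,X)=0,
\]
which restricts to $u_1\cup\cdots\cup u_m$ in $\widetilde{h}^\ast(X)$. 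Hence every $m$-fold product vanishes, so $\mathrm{nil}(\widetilde{h}^\ast(X))\leq m=\mathrm{cat}(X)$. The only delicate point is the compatibility of the relative products with the restriction maps, for which I would invoke the standard relative-product axioms of a multiplicative cohomology theory.

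For part (1), I would first replace $X$, up to homotopy equivalence, by a CW complex with a single $0$-cell and no cells in dimensions $1,\dots,q-1$; this is possible since $X$ is $(q-1)$-connected, and it leaves $\mathrm{hdim}(X)$ unchanged. Put $N=\mathrm{hdim}(X)$ and recall the Ganea characterisation: $\mathrm{cat}(X)\leq n+1$ exactly when the $n$-th Ganea fibration $p_n\colon G_n(X)\to X$ admits a section, and the fibre of $p_n$ is the $(n+1)$-fold join $(\Omega X)^{\ast(n+1)}$. As $\Omega X$ is $(q-2)$-connected, an induction on the join formula (\emph{$a$-connected $\ast$ $b$-connected is $(a+b+2)$-connected}) shows that this fibre is $\big((n+1)q-2\big)$-connected.

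With the fibre connectivity in hand, I would run the primary-obstruction sequence for sections of $p_n$: the obstruction to extending a section over the $(k+1)$-skeleton of $X$ lies in $H^{k+1}\!\big(X;\pi_k(\text{fibre})\big)$, which vanishes for $k\leq(n+1)q-2$ by the connectivity of the fibre and for $k+1>N$ by the dimension of $X$. Consequently all obstructions vanish once $(n+1)q>N$, i.e.\ $n+1>N/q$; choosing the least such $n+1$ yields $\mathrm{cat}(X)\leq N/q+1$, and the case $q=1$ recovers the classical bound $\mathrm{cat}(X)\leq\mathrm{hdim}(X)+1$. The main obstacle I anticipate is the connectivity bookkeeping for the iterated join together with the verification that the only surviving constraint on the obstruction groups is the dimensional one $k+1\leq N$; I would therefore isolate the join-connectivity estimate as an explicit lemma before feeding it into the obstruction argument.
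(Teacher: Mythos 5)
Your proposal is correct. Note, however, that the paper does not prove this lemma at all: it is stated as a recollection, with part (1) cited to James and part (2) to the second reference (Zapata's thesis), so there is no in-paper argument to compare against. What you supply are the two standard proofs, and both are sound. For part (2) your cup-length argument is the classical one: with the paper's (unreduced) convention $\mathrm{cat}(X)=m$ means $m$ open sets, each inclusion $U_i\hookrightarrow X$ nullhomotopic kills $u_i$ in $\widetilde{h}^\ast(U_i)$, the exact sequence of the pair lifts $u_i$ to $h^\ast(X,U_i)$, and the relative product lands in $h^\ast(X,X)=0$; this matches the paper's definition $\mathrm{nil}(S)=\min\{k:\text{every $k$-fold product vanishes}\}$, so you correctly get $\mathrm{nil}(\widetilde{h}^\ast(X))\le\mathrm{cat}(X)$. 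The one point you rightly flag -- existence of relative products $h^\ast(X,A)\otimes h^\ast(X,B)\to h^\ast(X,A\cup B)$ for open covers in a general multiplicative theory -- is harmless here, since the paper only ever applies the lemma to singular cohomology. For part (1) your Ganea-fibration argument is the modern textbook route (it is essentially Theorem 1.50 of the Cornea--Lupton--Oprea--Tanr\'e monograph the paper already cites), and your connectivity bookkeeping is right: the fibre $(\Omega X)^{\ast(n+1)}$ is $\bigl((n+1)q-2\bigr)$-connected, so all obstruction groups $H^{k+1}\bigl(X;\pi_k(\text{fibre})\bigr)$ vanish exactly when $(n+1)q>N$, giving $\mathrm{cat}(X)\le N/q+1$; the reduced CW model is not even needed, only $\dim X\le\mathrm{hdim}(X)$ and the homotopy invariance of $\mathrm{cat}$. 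James's original proof proceeds somewhat differently (via covers by skeleta-type subcomplexes), but your version is cleaner to verify and equally general; just be explicit that the obstructions are taken with local coefficients when $q=1$, since then $X$ need not be simply connected -- this does not affect the vanishing, which only uses that the coefficient groups or the cohomological degree are out of range.
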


\medskip We recall the following statements.

 \begin{lemma}\label{prop-secat-map}
 Let $p:E\to B$ be a fibration.
 \begin{enumerate}
     \item \cite[Theorem 18, pg. 108]{schwarz1966} We have $\mathrm{secat}\hspace{.1mm}(p)\leq \mathrm{cat}(B)$. 
     \item  \cite[Proposição 4.3.17, pg. 138]{zapata2022} If $p$ is nulhomotopic, then $\mathrm{secat}\hspace{.1mm}(p)=\mathrm{cat}(B).$
 \end{enumerate}
\end{lemma}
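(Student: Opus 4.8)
The plan is to prove both parts by comparing the two kinds of open covers that define $\mathrm{secat}(p)$ and $\mathrm{cat}(B)$: those whose members admit local sections of $p$, and those whose members are contractible within $B$. I would establish part (1) by showing that every open set contractible in $B$ admits a local section, and part (2) by showing that under the nullhomotopy hypothesis the reverse implication also holds, so that the two classes of covers coincide and the minimal cardinalities agree.

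For part (1), suppose $U\subset B$ is open and contractible within $B$, so the inclusion $i_U:U\hookrightarrow B$ is homotopic to a constant map $c_{b_0}$ at some point $b_0\in B$. The key step is to promote this nullhomotopy into a local section using the homotopy lifting property (HLP) of the fibration $p$. Explicitly, I would choose $e_0\in p^{-1}(b_0)$ and a homotopy $H:U\times I\to B$ with $H_0=c_{b_0}$ and $H_1=i_U$. The constant map $\widetilde{c}:U\to E$ at $e_0$ lifts $H_0$, so by the HLP there is a homotopy $\widetilde{H}:U\times I\to E$ with $\widetilde{H}_0=\widetilde{c}$ and $p\circ\widetilde{H}=H$; then $s:=\widetilde{H}_1$ satisfies $p\circ s=H_1=i_U$, i.e. $s$ is a local section of $p$ over $U$. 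Consequently every categorical open cover of $B$ is also a sectional cover for $p$, which gives $\mathrm{secat}(p)\le\mathrm{cat}(B)$.

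For part (2), the inequality $\mathrm{secat}(p)\le\mathrm{cat}(B)$ is already furnished by part (1), so it remains to prove $\mathrm{secat}(p)\ge\mathrm{cat}(B)$ when $p$ is nullhomotopic. Here the argument is purely formal: if $U\subset B$ admits a local section $s:U\to E$ with $p\circ s=i_U$, and $p\simeq c_{b_0}$, then $i_U=p\circ s\simeq c_{b_0}\circ s=c_{b_0}$, so $i_U$ is nullhomotopic and $U$ is contractible within $B$. Thus every sectional cover for $p$ is a categorical cover of $B$, yielding the reverse inequality and hence the claimed equality.

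The only genuine content lies in part (1), where the homotopy lifting property is used to convert a nullhomotopy of the inclusion into an honest local section; part (2) then follows by a one-line composition argument. The main thing to be careful about is the direction of the homotopy $H$, starting at the constant map and ending at the inclusion, so that the lifted endpoint $\widetilde{H}_1$ genuinely covers $i_U$ rather than merely covering the constant; I would also note that since both $\mathrm{secat}$ and $\mathrm{cat}$ are normalized as minimal cardinalities of open covers, the counting comparison is unaffected by the $+1$ convention adopted in this paper.
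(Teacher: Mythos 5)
Your proof is correct. Note that the paper itself offers no argument for this lemma: both parts are stated with citations only (part (1) to Schwarz's Theorem 18 and part (2) to the second author's thesis), so there is no in-paper proof to compare against; your write-up supplies exactly the standard argument that those references contain. Part (1) is the classical observation that over a set $U$ whose inclusion is nullhomotopic one can lift the homotopy $c_{b_0}\simeq i_U$ starting from a constant lift, and the time-one map of the lifted homotopy is a local section --- you are right that the direction of the homotopy is the one delicate point, and you have it the right way around. Part (2) is the equally standard converse: composing a local section with a nullhomotopy of $p$ shows $i_U\simeq p\circ s\simeq c_{b_0}$, so sectional covers are categorical covers. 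The one implicit hypothesis worth flagging is that your choice of $e_0\in p^{-1}(b_0)$ requires the fiber over the contraction point to be nonempty; this is automatic under the usual convention that fibrations are surjective (and certainly holds for the double covers to which the paper applies the lemma), but for a non-surjective Hurewicz fibration with $E=\emptyset$ the inequality of part (1) fails, so the convention is doing real work. With that caveat understood, both parts are complete.
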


\begin{remark}\label{trivial-crosssection} Given a Hausdorff space $X$ admitting a fixed-point free involution $\tau$, we have that:
\begin{itemize}
    \item[i)] The quotient map $q\colon X\to X/\tau$ is a fibration \cite[Theorem 3.2.2, pg. 66]{tom2008}. Therefore, it is possible to speak of its sectional category.  
    \item[ii)] The quotient map $q:X\to X/\tau$ is the trivial bundle if and only if it admits a continuous cross-section \cite[Pg. 36]{steenrod1951}. In particular, in the case when $X$ is path-connected, the quotient map $q:X\to X/\tau$ is not the trivial bundle and does not admit a continuous cross-section.
\end{itemize}
     
\end{remark}

\begin{example}\label{projective}
Recall that the $\mathbb{Z}_2$-cohomology of $\mathbb{R}P^m$ ($m\geq 1$) is given by $H^\ast(\mathbb{R}P^m;\mathbb{Z}_2)=\dfrac{\mathbb{Z}_2[\alpha]}{\langle \alpha^{m+1}\rangle}$ with $0\neq\alpha\in H^1(\mathbb{R}P^m;\mathbb{Z}_2)$. Then, for  dimensional reasons, the induced homomorphism $q^\ast_{\mathbb{Z}_2}:H^\ast\left(\mathbb{R}P^m;\mathbb{Z}_2\right)\to H^\ast\left(S^m;\mathbb{Z}_2\right)$ is trivial  and thus $\text{Ker}(q^\ast_{\mathbb{Z}_2})=\widetilde{H}^\ast(\mathbb{R}P^m;\mathbb{Z}_2)$ for any $m\geq 2$. Then $\mathrm{Nil}\left(\text{Ker}(q^\ast_{\mathbb{Z}_2})\right)\geq m+1$ for any $m\geq 2$. In addition $\text{cat}(\mathbb{R}P^m)=m+1$ (see \cite[Example 1.8, pg.4]{cornea2003lusternik}). Thus, for any $m\geq 2$, we have \[\mathrm{secat}(q:S^m\to \mathbb{R}P^m)=\mathrm{cat}(\mathbb{R}P^m)=\mathrm{Nil}\left(\text{Ker}(q^\ast_{\mathbb{Z}_2})\right)=m+1,\] where $q^\ast_{\mathbb{Z}_2}:H^\ast\left(\mathbb{R}P^m;\mathbb{Z}_2\right)\to H^\ast\left(S^m;\mathbb{Z}_2\right)$ is the induced homomorphism in $\mathbb{Z}_2$-cohomology. Furthermore, $\mathrm{secat}(q:S^1\to \mathbb{R}P^1)=\mathrm{cat}(\mathbb{R}P^1)=2$. Indeed, for $m\geq 2$, by Lemma~\ref{prop-sectional-category} together with Lemma~\ref{prop-secat-map} we have \begin{align*}
    m+1&\leq \mathrm{Nil}\left(\text{Ker}(q^\ast_{\mathbb{Z}_2})\right)\\
    &\leq \mathrm{secat}(q:S^m\to \mathbb{R}P^m)\\
    &\leq \mathrm{cat}(\mathbb{R}P^m)\\
    &= m+1.
\end{align*} On the other hand, $\mathrm{secat}(q:S^1\to \mathbb{R}P^1)\geq 2$. Moreover, again by Lemma~\ref{prop-secat-map}, $\mathrm{secat}(q:S^1\to \mathbb{R}P^1)\leq \mathrm{cat}(\mathbb{R}P^1)=2$.
\end{example}

In \cite{roth2008}, the author shows that $\mathrm{secat}(q^{\mathbb{R}^n})=n$ for any $n\geq 1$. However, we present an alternative proof of this fact.

\begin{lemma}\label{conf-eu}
We have that $\mathrm{secat}\left(q^{\mathbb{R}^n}:F(\mathbb{R}^n,2)\to D(\mathbb{R}^n,2)\right)=n$ for any $n\geq 1$.
\end{lemma}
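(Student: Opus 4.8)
The plan is to reduce the computation to the antipodal double cover $q\colon S^{n-1}\to\mathbb{R}P^{n-1}$, which is already settled in Example~\ref{projective}, by exhibiting a $\mathbb{Z}_2$-equivariant homotopy equivalence $F(\mathbb{R}^n,2)\simeq_{\mathbb{Z}_2}S^{n-1}$ and carrying the sectional category across it. First I would introduce the change of coordinates $(x,y)\mapsto\bigl(\tfrac{x+y}{2},\,x-y\bigr)$, a homeomorphism $F(\mathbb{R}^n,2)\xrightarrow{\cong}\mathbb{R}^n\times(\mathbb{R}^n\setminus\{0\})$. Under the swap involution $\tau_2(x,y)=(y,x)$ the center of mass is fixed while the difference vector is negated, so this homeomorphism conjugates $\tau_2$ to the involution that is the identity on the first factor and $v\mapsto-v$ on the second. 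Radially retracting $\mathbb{R}^n\setminus\{0\}$ onto $S^{n-1}$ via $v\mapsto v/\lVert v\rVert$ is equivariant for this involution, producing a $\mathbb{Z}_2$-equivariant homotopy equivalence $\varphi\colon F(\mathbb{R}^n,2)\to S^{n-1}$ where $S^{n-1}$ carries the antipodal involution. Passing to orbit spaces yields a homotopy equivalence $\overline{\varphi}\colon D(\mathbb{R}^n,2)\to\mathbb{R}P^{n-1}$ sitting in a commuting square over $\varphi$.

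Next I would transfer the sectional category across this square. Being $\mathbb{Z}_2$-equivariant, $\varphi$ restricts to a bijection on each $2$-point fibre, so by Remark~\ref{bup-pullback} the square is a pullback; hence $q^{\mathbb{R}^n}=\overline{\varphi}^{\,*}q$ and inequality~(\ref{ineq-canonical}) gives $\mathrm{secat}(q^{\mathbb{R}^n})\leq\mathrm{secat}(q\colon S^{n-1}\to\mathbb{R}P^{n-1})$. For the reverse inequality I would pull $q^{\mathbb{R}^n}$ back along a homotopy inverse $\psi$ of $\overline{\varphi}$: since $\overline{\varphi}\circ\psi\simeq\mathrm{id}$, the fibration $\psi^{*}q^{\mathbb{R}^n}=(\overline{\varphi}\psi)^{*}q$ is fibre-homotopy equivalent to $q$ and in particular has the same sectional category, so that $\mathrm{secat}(q)=\mathrm{secat}(\psi^{*}q^{\mathbb{R}^n})\leq\mathrm{secat}(q^{\mathbb{R}^n})$ again by~(\ref{ineq-canonical}). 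Combining the two estimates with Example~\ref{projective} (applied with $m=n-1\geq 1$) yields $\mathrm{secat}(q^{\mathbb{R}^n})=\mathrm{secat}(q\colon S^{n-1}\to\mathbb{R}P^{n-1})=n$ for all $n\geq 2$.

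Alternatively, and perhaps more transparently, I could run the two standard estimates directly on $D(\mathbb{R}^n,2)\simeq\mathbb{R}P^{n-1}$: the upper bound $\mathrm{secat}(q^{\mathbb{R}^n})\leq\mathrm{cat}(D(\mathbb{R}^n,2))=\mathrm{cat}(\mathbb{R}P^{n-1})=n$ from Lemma~\ref{prop-secat-map}(1), and the lower bound from Lemma~\ref{prop-sectional-category} with $\mathbb{Z}_2$-coefficients: since $F(\mathbb{R}^n,2)\simeq S^{n-1}$, the homomorphism $(q^{\mathbb{R}^n})^{*}$ vanishes in positive degrees for dimensional reasons, so the degree-one generator $\alpha$ of $H^{*}(D(\mathbb{R}^n,2);\mathbb{Z}_2)$ lies in $\mathrm{Ker}\bigl((q^{\mathbb{R}^n})^{*}\bigr)$ while $\alpha^{n-1}\neq 0$, giving $\mathrm{secat}(q^{\mathbb{R}^n})\geq n$. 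The case $n=1$ is handled directly: $F(\mathbb{R},2)$ has two contractible components interchanged by $\tau_2$, so $q^{\mathbb{R}}$ is the trivial double cover and admits a global section, whence $\mathrm{secat}(q^{\mathbb{R}})=1$.

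The main obstacle I anticipate is the clean transfer of sectional category across the homotopy equivalence: $\mathrm{secat}$ is attached to a specific fibration rather than to a bare homotopy type, so I must argue through the pullback description (Remark~\ref{bup-pullback}, inequality~(\ref{ineq-canonical}), and fibre-homotopy invariance via a homotopy inverse) rather than simply invoking homotopy invariance. The cohomological route sidesteps this transfer but then relies on knowing the ring structure of $H^{*}(D(\mathbb{R}^n,2);\mathbb{Z}_2)$, which again rests on the equivariant equivalence established in the first step; so in either case the equivariant deformation retraction $F(\mathbb{R}^n,2)\simeq_{\mathbb{Z}_2}S^{n-1}$ is the load-bearing ingredient.
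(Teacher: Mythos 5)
Your proposal is correct and follows essentially the same route as the paper: the paper's proof also reduces to $\mathrm{secat}(q\colon S^{n-1}\to\mathbb{R}P^{n-1})=n$ from Example~\ref{projective}, using the same pair of equivariant maps $x\mapsto(x,-x)$ and $(x,y)\mapsto\frac{x-y}{\|x-y\|}$, except that it handles both inequalities symmetrically by exhibiting two pullback squares and applying Lemma~\ref{pullback} twice, rather than invoking fibre-homotopy invariance along a homotopy inverse for the reverse direction. Both your pullback argument and your alternative cohomological estimate are sound, and your treatment of the case $n=1$ matches the paper's.
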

\begin{proof}
The case $n=1$ is easy since the configuration space $D(\mathbb{R},2)$ is contractible. For $n\geq 2$, consider the maps $\varphi\colon S^{n-1}\to F(\mathbb{R}^n,2)$ and $\psi\colon F(\mathbb{R}^n,2)\to S^{n-1}$ given by $\varphi(x)=(x,-x)$ and $\psi(x,y)=\dfrac{x-y}{\parallel x-y\parallel}$. Then, we have that the following diagrams \begin{eqnarray*}
\xymatrix@C=1cm{ 
       S^{n-1}  \ar[r]^{\varphi}\ar[d]_{q} &  F(\mathbb{R}^n,2)\ar[d]^{q^{\mathbb{R}^n}} &\\
      \mathbb{R}P^{n-1} \ar[r]_{\overline{\varphi}} & D(\mathbb{R}^n,2) &
       } & \xymatrix@C=1cm{ 
       F(\mathbb{R}^n,2) \ar[r]^{\quad\psi} \ar[d]_{q^{\mathbb{R}^n}} &  S^{n-1}\ar[d]^{q} &\\
      D(\mathbb{R}^n,2) \ar[r]_{\quad\overline{\psi}} &  \mathbb{R}P^{n-1} &
       }
\end{eqnarray*} are pullbacks. Then, by Lemma~\ref{pullback}, we conclude that $\mathrm{secat}(q^{\mathbb{R}^n})=\mathrm{secat}(q)$ and therefore, $\mathrm{secat}(q^{\mathbb{R}^n})=n$ (by Example~\ref{projective}).
\end{proof}

In the same way, we will check that $\mathrm{secat}\left(q^{S^n}\right)=n+1$ for any $n\geq 1$. 

\begin{lemma}\label{conf-sphere}
We have that $\mathrm{secat}\left(q^{S^n}:F(S^n, 2)\to D(S^n,2)\right)=n+1$ for any $n\geq 1$.
\end{lemma}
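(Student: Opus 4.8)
The plan is to mirror the proof of Lemma~\ref{conf-eu}, comparing $q^{S^n}$ with the double cover $q\colon S^n\to\mathbb{R}P^n$, for which Example~\ref{projective} already gives $\mathrm{secat}(q\colon S^n\to\mathbb{R}P^n)=n+1$. Concretely, I would establish the equality $\mathrm{secat}(q^{S^n})=\mathrm{secat}(q\colon S^n\to\mathbb{R}P^n)$ by exhibiting two pullback squares, one in each direction, and then invoking Lemma~\ref{pullback} twice to sandwich the two sectional categories together.

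First I would introduce the two $\mathbb{Z}_2$-equivariant maps $\varphi\colon S^n\to F(S^n,2)$ and $\psi\colon F(S^n,2)\to S^n$ given by $\varphi(x)=(x,-x)$ and $\psi(x,y)=\dfrac{x-y}{\parallel x-y\parallel}$, where $S^n\subset\mathbb{R}^{n+1}$ and the antipodal involution plays the role of $\tau$ on $S^n$. Both are well defined for every $n\geq 1$: antipodal points are always distinct, so $\varphi(x)\in F(S^n,2)$, while $x\neq y$ guarantees $x-y\neq 0$, so $\psi(x,y)\in S^n$. Equivariance is immediate, since $\varphi(-x)=(-x,x)=\tau_2(\varphi(x))$ and $\psi(y,x)=-\psi(x,y)$; moreover $\psi\circ\varphi=1_{S^n}$ because $\parallel x-(-x)\parallel=2$. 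Passing to quotients, $\varphi$ and $\psi$ descend to maps $\overline{\varphi}\colon\mathbb{R}P^n\to D(S^n,2)$ and $\overline{\psi}\colon D(S^n,2)\to\mathbb{R}P^n$.

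Next I would argue that the two commutative squares
\[
\xymatrix@C=1.2cm{ S^n\ar[r]^{\varphi}\ar[d]_{q} & F(S^n,2)\ar[d]^{q^{S^n}} \\ \mathbb{R}P^n\ar[r]_{\overline{\varphi}} & D(S^n,2)} \qquad \xymatrix@C=1.2cm{ F(S^n,2)\ar[r]^{\quad\psi}\ar[d]_{q^{S^n}} & S^n\ar[d]^{q} \\ D(S^n,2)\ar[r]_{\quad\overline{\psi}} & \mathbb{R}P^n}
\]
are both pullbacks. This follows exactly as in Remark~\ref{bup-pullback}: in each square the top horizontal map is an equivariant map between the total spaces of $2$-sheeted covers, hence it restricts to a homeomorphism on every fiber (under $\varphi$ a pair of antipodal points goes to a swapped pair $\{(x,-x),(-x,x)\}$, and under $\psi$ a swapped pair goes to the antipodal pair $\{\psi(x,y),-\psi(x,y)\}$). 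Applying Lemma~\ref{pullback} to the left square (with fibration $q^{S^n}$) gives $\mathrm{secat}(q\colon S^n\to\mathbb{R}P^n)\leq\mathrm{secat}(q^{S^n})$, while applying it to the right square (with fibration $q\colon S^n\to\mathbb{R}P^n$) gives $\mathrm{secat}(q^{S^n})\leq\mathrm{secat}(q\colon S^n\to\mathbb{R}P^n)$. The two inequalities force equality, and Example~\ref{projective} then yields $\mathrm{secat}(q^{S^n})=n+1$, uniformly in $n\geq 1$.

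The routine verifications (well-definedness, continuity, equivariance, and $\psi\circ\varphi=1_{S^n}$) are all straightforward. The only step requiring genuine care is the pullback claim for the two squares, and in particular confirming that the fiberwise-homeomorphism criterion of Remark~\ref{bup-pullback} applies verbatim when the target double cover is $q\colon S^n\to\mathbb{R}P^n$ rather than the configuration-space cover $q^Y$; once that is in place, the double application of Lemma~\ref{pullback} does all the work and no cohomological computation is needed.
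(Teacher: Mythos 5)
Your proposal is correct and follows exactly the paper's own proof: the same maps $\varphi(x)=(x,-x)$ and $\psi(x,y)=\dfrac{x-y}{\parallel x-y\parallel}$, the same two pullback squares comparing $q^{S^n}$ with $q\colon S^n\to\mathbb{R}P^n$, the same double application of Lemma~\ref{pullback}, and the same appeal to Example~\ref{projective}. The only difference is that you spell out the routine verifications (equivariance, the fiberwise-homeomorphism criterion of Remark~\ref{bup-pullback}) that the paper leaves implicit, which is a harmless elaboration rather than a different route.
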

\begin{proof}
Consider the following pullbacks \begin{eqnarray*}
\xymatrix@C=1cm{ 
       S^{n}  \ar[r]^{\varphi}\ar[d]_{q} &  F(S^n,2)\ar[d]^{q^{S^n}} &\\
      \mathbb{R}P^{n} \ar[r]_{\overline{\varphi}} & B(S^n,2) &
       } & \xymatrix@C=1cm{ 
       F(S^n,2) \ar[r]^{\quad\psi} \ar[d]_{q^{S^n}} &  S^{n}\ar[d]^{q} &\\
      B(S^n,2) \ar[r]_{\quad\overline{\psi}} &  \mathbb{R}P^{n} &
       }
\end{eqnarray*} where $\varphi(x)=(x,-x)$ and $\psi(x,y)=\dfrac{x-y}{\parallel x-y\parallel}$. Then, by Lemma~\ref{pullback}, we conclude that $\mathrm{secat}(q^{S^n})=\mathrm{secat}(q)$ and therefore, $\mathrm{secat}(q^{S^n})=n+1$ (by Example~\ref{projective}).
\end{proof}

Our main result is as follows. We show that the BUP is valid if certain conditions in terms of
sectional category are satisfied.
\begin{theorem}[Principal theorem]\label{theorem-1} Suppose that $X$ and $Y$ are Hausdorff spaces with a fixed-point free involution $\tau:X\to X$. If the triple $\left((X,\tau);Y\right)$ does not satisfy the BUP then \[\mathrm{secat}(q)\leq \mathrm{secat}(q^Y).\] Equivalently, if $\mathrm{secat}(q)> \mathrm{secat}(q^Y)$ then the triple $((X,\tau);Y)$ satisfies the BUP.
\end{theorem}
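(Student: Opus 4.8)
The plan is to derive the stated inequality directly from the topological characterisation of the failure of the BUP together with the behaviour of sectional category under pullbacks. First I would suppose that the triple $\left((X,\tau);Y\right)$ does not satisfy the BUP. By Proposition~\ref{top-bup}, there is then a $\mathbb{Z}_2$-equivariant continuous map $\varphi:X\to F(Y,2)$ fitting into a strictly commutative square with $q$ on the left, $q^Y$ on the right, and the induced map $\overline{\varphi}:X/\tau\to D(Y,2)$ along the bottom. The decisive upgrade comes from Remark~\ref{bup-pullback}: because $\varphi$ restricts to a homeomorphism on each fibre of the two $2$-sheeted coverings, this square is in fact a genuine pullback, not merely a commutative diagram.

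The next step is to bring in the sectional-category machinery. Here I would recall that $q^Y:F(Y,2)\to D(Y,2)$ is itself a fibration, being a $2$-sheeted covering of Hausdorff spaces exactly as in Remark~\ref{trivial-crosssection}(i), so that $\mathrm{secat}(q^Y)$ is defined and the pullback results apply with $p=q^Y$. Since every genuine pullback is in particular a quasi pullback, I would then invoke Lemma~\ref{pullback} with $p=q^Y$ and $p'=q$ to conclude immediately that $\mathrm{secat}(q)\leq\mathrm{secat}(q^Y)$. Alternatively, identifying $q$ with the canonical pullback $\overline{\varphi}^{\,\ast}(q^Y)$, one may appeal directly to the inequality~(\ref{ineq-canonical}) to obtain the same bound.

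Finally, the equivalent reformulation is just the contrapositive: if $\mathrm{secat}(q)>\mathrm{secat}(q^Y)$, then no such equivariant $\varphi$ can exist, whence Proposition~\ref{top-bup} forces the triple to satisfy the BUP. I do not expect a serious obstacle here, since essentially all of the conceptual work has already been packaged into Proposition~\ref{top-bup} and Remark~\ref{bup-pullback}. The only points demanding care are verifying that $q^Y$ is genuinely a fibration, so that Lemma~\ref{pullback} is applicable, and confirming that the commutative square is a true pullback rather than merely commutative; both are supplied by the earlier results, so the argument reduces to a short chain of citations.
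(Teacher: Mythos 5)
Your argument is exactly the paper's proof: the authors likewise combine Proposition~\ref{top-bup}, Remark~\ref{bup-pullback}, and Lemma~\ref{pullback} (with the contrapositive giving the second formulation), merely stating the chain of citations without spelling out the details as you do. Your added care in checking that $q^Y$ is a fibration and that the pullback square is in particular a quasi pullback is a sound, if implicit, part of the same route.
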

\begin{proof}
It follows from Proposition~\ref{top-bup} and Remark~\ref{bup-pullback} together with Lemma~\ref{pullback}.
\end{proof}

Next, we will present direct applications of Theorem~\ref{theorem-1}.
\begin{example}\label{bup-sphere-y}
Let $m\geq 1$ and $Y$ be a Hausdorff space, and consider the covering maps: 
\begin{eqnarray*}
\xymatrix{ S^m  \ar[d]_{q} &  F(Y,2)\ar[d]^{q^{Y}} &\\
      \mathbb{R}P^m  & D(Y,2) &
       }
\end{eqnarray*} Recall that $\mathrm{secat}(q)=m+1$ (see Example~\ref{projective}). Therefore, by Theorem~\ref{theorem-1}, if $\mathrm{secat}(q^Y)<m+1$ then the triple $\left((S^m,A);Y\right)$ satisfies the BUP. 
\end{example}

Example~\ref{bup-sphere-y} implies the famous Borsuk-Ulam theorem \cite{borsuk1933}.

\begin{corollary}\label{esfere-bup}[Famous Borsuk-Ulam theorem]
We have that the triple $\left((S^m,A);\mathbb{R}^n\right)$ satisfies the BUP for any $1\leq n\leq m$.
\end{corollary}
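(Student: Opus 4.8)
The plan is to apply Theorem~\ref{theorem-1} directly, specialising to $X=S^m$, $\tau=A$ the antipodal involution, and $Y=\mathbb{R}^n$. By that theorem it suffices to verify the strict inequality $\mathrm{secat}(q)>\mathrm{secat}(q^{\mathbb{R}^n})$, where $q:S^m\to\mathbb{R}P^m$ and $q^{\mathbb{R}^n}:F(\mathbb{R}^n,2)\to D(\mathbb{R}^n,2)$ are the associated $2$-sheeted covering maps; once this is established, the conclusion that the triple satisfies the BUP is immediate.

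First I would invoke Example~\ref{projective}, which computes $\mathrm{secat}(q:S^m\to\mathbb{R}P^m)=m+1$ for every $m\geq 1$. Next I would invoke Lemma~\ref{conf-eu}, which gives $\mathrm{secat}(q^{\mathbb{R}^n})=n$ for every $n\geq 1$. Combining these two computations with the standing hypothesis $1\leq n\leq m$ yields
\[
\mathrm{secat}(q^{\mathbb{R}^n})=n\leq m<m+1=\mathrm{secat}(q),
\]
so the required strict inequality holds. Theorem~\ref{theorem-1} then guarantees that $\left((S^m,A);\mathbb{R}^n\right)$ satisfies the BUP. Equivalently, this is exactly the special case $Y=\mathbb{R}^n$ of Example~\ref{bup-sphere-y}, since there the hypothesis $\mathrm{secat}(q^Y)<m+1$ reduces to $n<m+1$.

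There is no genuine obstacle remaining at this stage: the substantive work has already been carried out in establishing the two sectional-category computations cited above. The only point deserving attention is that the criterion in Theorem~\ref{theorem-1} requires a \emph{strict} inequality, which is precisely why the hypothesis is $n\leq m$ rather than $n\leq m+1$. The borderline case $n=m+1$ would produce $\mathrm{secat}(q^{\mathbb{R}^{m+1}})=m+1=\mathrm{secat}(q)$, an equality that lies outside the scope of the criterion and for which the present argument yields no information.
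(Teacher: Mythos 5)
Your argument is correct and is essentially identical to the paper's: the paper likewise combines Lemma~\ref{conf-eu} ($\mathrm{secat}(q^{\mathbb{R}^n})=n$) with Example~\ref{bup-sphere-y} (i.e.\ $\mathrm{secat}(q:S^m\to\mathbb{R}P^m)=m+1$ from Example~\ref{projective} plus Theorem~\ref{theorem-1}) to get the strict inequality $n<m+1$. Your closing remark about the borderline case $n=m+1$ is a sensible observation but not needed for the proof.
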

\begin{proof}
    We have that $\mathrm{secat}(q^{\mathbb{R}^n})=n$ (see Lemma~\ref{conf-eu}), and thus, from Example~\ref{bup-sphere-y}, we conclude that the triple $\left((S^m,A);\mathbb{R}^n\right)$ satisfies the BUP for any $1\leq n\leq m$. 
\end{proof}

Moreover, we have the following statement.

\begin{proposition}\label{bup-dim} 
Let $Y$ be a path-connected topological manifold (without boundary) such that $\dim(Y)\leq\dfrac{m}{2}$. Then 
\begin{enumerate}
    \item The triple $\left((S^{m+1},A);Y\right)$ satisfies the BUP.
    \item If $\mathrm{hdim}(D(Y,2))\leq 2\dim(Y)-1$ then, $\left((S^{m},A);Y\right)$ satisfies the BUP.
\end{enumerate}
\end{proposition}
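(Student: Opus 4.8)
The plan is to bound $\mathrm{secat}(q^Y)$ from above and then invoke Theorem~\ref{theorem-1}, whose hypothesis is precisely $\mathrm{secat}(q)>\mathrm{secat}(q^Y)$. For part (1) I would first recall from Example~\ref{projective} that $\mathrm{secat}(q:S^{m+1}\to\mathbb{R}P^{m+1})=m+2$. Hence it suffices to show $\mathrm{secat}(q^Y)\leq m+1$. The covering $q^Y:F(Y,2)\to D(Y,2)$ is a fibration, so by Lemma~\ref{prop-secat-map}(1) we have $\mathrm{secat}(q^Y)\leq\mathrm{cat}(D(Y,2))$, and thus everything reduces to estimating the L-S category of the unordered configuration space $D(Y,2)$.

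To estimate $\mathrm{cat}(D(Y,2))$ I would use Lemma~\ref{cat-stimates}(1), which gives $\mathrm{cat}(X)\leq \mathrm{hdim}(X)/q+1$ for a $(q-1)$-connected CW complex $X$. The key inputs are the homotopical dimension and the connectivity of $D(Y,2)$. Since $Y$ is a manifold without boundary of dimension $d:=\dim(Y)$, both $F(Y,2)$ and $D(Y,2)$ are open $2d$-manifolds, hence have the homotopy type of CW complexes of dimension at most $2d-1$; so $\mathrm{hdim}(D(Y,2))\leq 2d-1$. For the trivial bound one always has $\mathrm{cat}(D(Y,2))\leq\mathrm{hdim}(D(Y,2))+1\leq 2d\leq m$ using the hypothesis $d\leq m/2$. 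This already yields $\mathrm{secat}(q^Y)\leq \mathrm{cat}(D(Y,2))\leq m<m+2=\mathrm{secat}(q)$, so Theorem~\ref{theorem-1} applies and gives part (1).

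For part (2), the sharper conclusion for $S^m$ (rather than $S^{m+1}$) is obtained by improving the estimate on $\mathrm{cat}(D(Y,2))$ under the extra hypothesis $\mathrm{hdim}(D(Y,2))\leq 2d-1$. Here I would again apply Lemma~\ref{cat-stimates}(1): since $\mathrm{secat}(q:S^m\to\mathbb{R}P^m)=m+1$ by Example~\ref{projective}, it suffices to show $\mathrm{cat}(D(Y,2))\leq m$, and with $\mathrm{hdim}(D(Y,2))\leq 2d-1\leq m-1$ the trivial bound $\mathrm{cat}\leq\mathrm{hdim}+1$ gives $\mathrm{cat}(D(Y,2))\leq m$, whence $\mathrm{secat}(q^Y)\leq m< m+1=\mathrm{secat}(q)$ and Theorem~\ref{theorem-1} delivers the BUP for $\left((S^m,A);Y\right)$.

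The main obstacle I anticipate is justifying the homotopical-dimension bound on $D(Y,2)$ rigorously. Asserting that an open $2d$-manifold has the homotopy type of a CW complex of dimension $\le 2d-1$ requires care (it follows from Morse theory for open manifolds, which carry a handle decomposition without top-dimensional handles), and passing from $F(Y,2)$ to the quotient $D(Y,2)$ under the free $\Sigma_2$-action needs the observation that the quotient of a manifold by a free involution is again a manifold of the same dimension. The connectivity hypothesis in Lemma~\ref{cat-stimates}(1) is only used at the level $q=1$ (path-connectedness), which is guaranteed by the standing assumption that $Y$ is path-connected together with $d\ge 1$; so the delicate point is really the dimension estimate, and the role of the separate hypothesis $\mathrm{hdim}(D(Y,2))\le 2d-1$ in part (2) is precisely to supply this bound as an assumption rather than derive it, sharpening the sphere from $S^{m+1}$ to $S^m$.
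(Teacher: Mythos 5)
Your proof follows the same route as the paper's: bound $\mathrm{secat}(q^Y)\leq\mathrm{cat}(D(Y,2))\leq\mathrm{hdim}(D(Y,2))+1$ via Lemma~\ref{prop-secat-map}(1) and Lemma~\ref{cat-stimates}(1), and compare with $\mathrm{secat}(q)=m+2$ (resp.\ $m+1$) from Example~\ref{projective}, invoking Theorem~\ref{theorem-1}. The one place you diverge is in asserting that $\mathrm{hdim}(D(Y,2))\leq 2\dim(Y)-1$ holds automatically because $D(Y,2)$ is an open $2\dim(Y)$-manifold. The paper deliberately does not make this claim: it uses only the bound $\mathrm{hdim}(D(Y,2))\leq 2\dim(Y)$ in part (1), takes the sharper bound as an explicit hypothesis in part (2), and remarks right after the proposition that the authors do not know an example with $\mathrm{hdim}(D(Y,2))=2\dim(Y)$ --- i.e., they treat the sharper bound as open in general. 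Your Morse-theoretic/handle-decomposition justification is standard for \emph{smooth} open manifolds, but $Y$ is only assumed to be a topological manifold, where handle decompositions are not available in every dimension, so this step is not rigorous as stated. Fortunately this does not damage your argument: part (1) goes through with the weaker bound, since $2\dim(Y)+1\leq m+1<m+2$, and in part (2) the sharp bound is a hypothesis, exactly as you use it; restated with the weaker bound in part (1), your proof coincides with the paper's (which also disposes of the low-dimensional cases $\dim(Y)\leq 1$ separately before applying the CW-complex estimates).
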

\begin{proof} The case, $\dim(Y)\leq 1$ is shown easily. We will suppose that $\dim(Y)\geq 2$.
\begin{enumerate}
    \item From Lemma~\ref{prop-secat-map} together with Lemma~\ref{cat-stimates}-item (1), we have \begin{eqnarray*}
\mathrm{secat}(q^Y) &\leq & \mathrm{cat}(D(Y,2))\\
&\leq& \mathrm{hdim}(D(Y,2))+1\\
&\leq& 2\dim(Y)+1\\
&\leq& m+1.
\end{eqnarray*} Thus $\mathrm{secat}(q^Y)<m+2=\mathrm{secat}(q\colon S^{m+1}\to\mathbb{R}P^{m+1})$ and by Example~\ref{bup-sphere-y}, then we conclude that the triple $\left((S^{m+1},A);Y\right)$ satisfies the BUP. 
\item From Lemma~\ref{prop-secat-map} together with Lemma~\ref{cat-stimates}-item (1), we have \begin{eqnarray*}
\mathrm{secat}(q^Y) &\leq & \mathrm{cat}(D(Y,2))\\
&\leq& \mathrm{hdim}(D(Y,2))+1\\
&\leq& 2\dim(Y)-1+1\\
&\leq& m.
\end{eqnarray*} Thus $\mathrm{secat}(q^Y)<m+1=\mathrm{secat}(q\colon S^{m}\to\mathbb{R}P^{m})$ and by Example~\ref{bup-sphere-y}, then we conclude that the triple $\left((S^{m},A);Y\right)$ satisfies the BUP. 
\end{enumerate}
\end{proof}

We do not know an example of a path-connected manifold $Y$ (without boundary) such that
$\mathrm{hdim}(D(Y,2))=2\dim(Y)$.

An immediate consequence of Proposition~\ref{bup-dim} is the following example.

\begin{example}\label{surface}
Let $\Sigma$ be a connected surface. Then the triple $\left((S^m,A);\Sigma\right)$ satisfies the BUP for any $m\geq 5$.
\end{example}

Example~\ref{surface} together with \cite[Theorem 2, pg. 1743]{daciberg2010} imply that the triple $\left((S^m,A);\mathbb{R}P^2\right)$ satisfies the BUP for any $m\geq 2$. In \cite[Theorem 2, pg. 1743]{daciberg2010}, the authors showed that the triple $\left((S^m,A);\mathbb{R}P^2\right)$ satisfies the BUP for $m\in\{2,3\}$. On the other hand, we can check that (or see \cite[Proposition 4, pg. 1743]{daciberg2010}) the triple $\left((S^m,A);\Sigma\right)$ satisfies the BUP for any $m\geq 2$ and any connected closed surface $\Sigma$ different from $\mathbb{R}P^2$ and $S^2$.

\medskip  Let $\text{Emb}(M)$ be the smallest dimension of Euclidean spaces when $M$ can be embedded. We set $\text{Emb}(M)=\infty$ if no such embedding exists.  We have the following statement. 

\begin{proposition}\label{embed}
Suppose that $X$ and $Y$ are Hausdorff spaces with $\mathrm{Emb}(Y)<\infty$. If $\mathrm{secat}(q:X\to X/\tau)>\mathrm{Emb}(Y)$ then the triple $((X,\tau);Y)$ satisfies the BUP.
\end{proposition}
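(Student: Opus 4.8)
The plan is to deduce the statement from the Principal Theorem (Theorem~\ref{theorem-1}) by establishing the single bound $\mathrm{secat}(q^Y)\leq\mathrm{Emb}(Y)$. Once this is in hand, the hypothesis gives $\mathrm{secat}(q)>\mathrm{Emb}(Y)\geq\mathrm{secat}(q^Y)$, and Theorem~\ref{theorem-1} immediately yields the BUP for $((X,\tau);Y)$. Thus the whole proposition reduces to comparing the sectional category of the configuration double cover of $Y$ with the embedding dimension of $Y$.

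Write $n=\mathrm{Emb}(Y)$ and fix an embedding $i\colon Y\hookrightarrow\mathbb{R}^n$ (which exists since $\mathrm{Emb}(Y)<\infty$; the degenerate case in which $Y$ is a point, where $F(Y,2)=\emptyset$ and the BUP holds trivially, may be set aside). First I would lift $i$ to configuration spaces: because $i$ is injective, the assignment $(y_1,y_2)\mapsto(i(y_1),i(y_2))$ defines a continuous map $F(i)\colon F(Y,2)\to F(\mathbb{R}^n,2)$ that is $\mathbb{Z}_2$-equivariant for the coordinate-swap involutions. Consequently $F(i)$ descends to the unordered quotients, giving a map $\overline{F(i)}\colon D(Y,2)\to D(\mathbb{R}^n,2)$ and a strictly commutative square
\begin{eqnarray*}
\xymatrix@C=2cm{
F(Y,2)  \ar[r]^{F(i)}\ar[d]_{q^Y} &  F(\mathbb{R}^n,2)\ar[d]^{q^{\mathbb{R}^n}} &\\
D(Y,2) \ar[r]_{\overline{F(i)}} & D(\mathbb{R}^n,2). &
}
\end{eqnarray*}

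Next I would argue, exactly in the spirit of Remark~\ref{bup-pullback}, that this square is a pullback: both vertical maps are $2$-sheeted covering maps, and the equivariant map $F(i)$ sends each fiber of $q^Y$ bijectively onto the corresponding fiber of $q^{\mathbb{R}^n}$ — the two points of a fiber map to the distinct points $(i(y_1),i(y_2))$ and $(i(y_2),i(y_1))$, distinct precisely because $i$ is injective. Since a pullback is in particular a quasi pullback and $q^{\mathbb{R}^n}$ is a fibration, Lemma~\ref{pullback} gives $\mathrm{secat}(q^Y)\leq\mathrm{secat}(q^{\mathbb{R}^n})$. By Lemma~\ref{conf-eu} we have $\mathrm{secat}(q^{\mathbb{R}^n})=n=\mathrm{Emb}(Y)$, which furnishes the required bound $\mathrm{secat}(q^Y)\leq\mathrm{Emb}(Y)$, and combining this with the hypothesis and Theorem~\ref{theorem-1} finishes the proof.

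The routine points — continuity and equivariance of $F(i)$, and well-definedness of $\overline{F(i)}$ — present no difficulty. The one step that deserves genuine care, and which I regard as the main obstacle, is verifying that the induced square is a true pullback and not merely commutative, that is, checking that $F(i)$ is a fiberwise homeomorphism. This is exactly where the injectivity of the embedding $i$ is indispensable, and it is precisely this fiberwise property that licenses the application of Lemma~\ref{pullback} (equivalently, inequality~(\ref{ineq-canonical})) to transport the sectional-category estimate from $\mathbb{R}^n$ down to $Y$.
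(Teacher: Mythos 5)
Your proposal is correct and follows essentially the same route as the paper: embed $Y$ in $\mathbb{R}^{\mathrm{Emb}(Y)}$, observe that the induced square of configuration double covers is a pullback, deduce $\mathrm{secat}(q^Y)\leq\mathrm{secat}(q^{\mathbb{R}^{\mathrm{Emb}(Y)}})=\mathrm{Emb}(Y)$ via Lemma~\ref{pullback} and Lemma~\ref{conf-eu}, and conclude with Theorem~\ref{theorem-1}. The only difference is that you spell out the fiberwise-homeomorphism verification that the paper leaves implicit.
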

\begin{proof}
Consider $Y\subset\mathbb{R}^k$, where $k=\mathrm{Emb}(Y)$. We have the following pullback 
\begin{eqnarray*}
\xymatrix{ F(Y,2)  \ar[d]_{q^Y}\ar[r]_{ } &  F(\mathbb{R}^k,2)\ar[d]^{q^{\mathbb{R}^k}} &\\
      D(Y,2)\ar[r]_{ }  & D(\mathbb{R}^k,2) &
       }
\end{eqnarray*} where the horizontal maps in the diagram are induced by
the inclusion $Y\hookrightarrow\mathbb{R}^k$. Then $\mathrm{secat}(q^Y)\leq\mathrm{secat}(q^{\mathbb{R}^k})=k=\mathrm{Emb}(Y)<\mathrm{secat}(q:X\to X/\tau)$. Thus, by Theorem~\ref{theorem-1}, we obtain that the triple $((X,\tau);Y)$ satisfies the BUP. 
\end{proof}

In particular, we have the following example. 

\begin{example}\label{sphere-orientable-surface}
Note that $\mathrm{Emb}(S^2)=3$. Thus, by Proposition~\ref{embed}, we obtain that the triple $\left((S^m,A);S^2\right)$ satisfies the BUP for any $m\geq 3$. 
\end{example}

Note that, Example~\ref{sphere-orientable-surface} cannot be improved, that is, we can check that (or see \cite[Corollary 1-(b), pg. 1743]{daciberg2010}) the triple $\left((S^2,A);S^2\right)$ does not  satisfy the BUP.

\medskip The following statement is a partial result of the BUP when $Y=\Gamma$ is a graph. 
\begin{example}\label{planar-graph}
Suppose that $\Gamma$ is a planar graph such that $F(\Gamma,2)$ is path-connected. We have that $\mathrm{Emb}(\Gamma)=2$. Thus, by Proposition~\ref{embed}, we obtain that the triple $\left((S^m,A);\Gamma\right)$ satisfies the BUP for any $m\geq 2$. Moreover, note that $\mathrm{secat}(q^{\Gamma})=2$. Indeed, the inequality $2\leq \mathrm{secat}(q^{\Gamma})$ follows from Remark~\ref{trivial-crosssection} item $(i)$ together with the hypothesis that $F(\Gamma,2)$ is path-connected. The inequality $ \mathrm{secat}(q^{\Gamma})\leq 2$ follows from the hypothesis that $\Gamma$ is planar (hence $\mathrm{secat}(q^{\Gamma})\leq \mathrm{secat}(q^{\mathbb{R}^2})=2$).  
\end{example}

\begin{remark}
Note that, for $m\geq 1$, by the famous Borsuk-Ulam theorem (Example~\ref{esfere-bup}) together with Remark~\ref{subset}, we obtain that the triple $\left((S^m,A);Z\right)$ satisfies the BUP for any subspace $Z\subset\mathbb{R}^m$.
\end{remark}

When $S^\infty$ is the infinite dimensional sphere we have:

\begin{example}
For cohomological reasons, note that $\mathrm{secat}(S^\infty\to\mathbb{R}P^\infty)=\infty$, then the triple $\left((S^\infty,A);Y\right)$ satisfies the BUP for any finite dimensional topological manifold $Y$. Indeed, we have \begin{eqnarray*}
\mathrm{secat}(q^Y) &\leq& \mathrm{cat}(D(Y,2)) \\ &\leq&\dim\left(D(Y,2)\right)+1\\ &<& \infty.
\end{eqnarray*}
\end{example}

Next, we will study the BUP when $X=F(Z,2)$ is the ordered configuration space. Recall that, we have the free involution $\tau_2:F(Z,2)\to F(Z,2),~\tau_2(x,y)=(y,x)$ and the equality $\mathrm{secat}(q^{\mathbb{R}^n})=n$ (see Lemma~\ref{conf-eu}).

\begin{proposition}\label{domain-conf-eucli} 
Let $Y$ be a path-connected topological manifold (without boundary). If $\dim(Y)\leq\dfrac{n-1}{2}$ then the triple $\left((F(\mathbb{R}^{n+1},2),\tau_2);Y\right)$ satisfies the BUP.
\end{proposition}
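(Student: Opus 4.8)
The plan is to derive this directly from the principal theorem (Theorem~\ref{theorem-1}), so that the whole argument reduces to comparing two sectional categories. Here $X=F(\mathbb{R}^{n+1},2)$ carries the transposition involution $\tau_2$, and the key observation is that the associated quotient map $q\colon F(\mathbb{R}^{n+1},2)\to F(\mathbb{R}^{n+1},2)/\tau_2$ is precisely the double cover $q^{\mathbb{R}^{n+1}}\colon F(\mathbb{R}^{n+1},2)\to D(\mathbb{R}^{n+1},2)$. Consequently Lemma~\ref{conf-eu} gives $\mathrm{secat}(q)=\mathrm{secat}(q^{\mathbb{R}^{n+1}})=n+1$, and by Theorem~\ref{theorem-1} it suffices to establish the strict inequality $\mathrm{secat}(q^Y)<n+1$.

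To bound $\mathrm{secat}(q^Y)$ I would follow the estimate already used in the proof of Proposition~\ref{bup-dim}. Writing $d=\dim(Y)$, the space $D(Y,2)$ is an open manifold of dimension $2d$ (it is the quotient of the open $2d$-manifold $F(Y,2)\subset Y\times Y$ by the free $\Sigma_2$-action), so $\mathrm{hdim}(D(Y,2))\leq 2d$. From Lemma~\ref{prop-secat-map}-(1) together with Lemma~\ref{cat-stimates}-(1) one then obtains
\begin{eqnarray*}
\mathrm{secat}(q^Y)\;\leq\;\mathrm{cat}(D(Y,2))\;\leq\;\mathrm{hdim}(D(Y,2))+1\;\leq\;2d+1.
\end{eqnarray*}
The hypothesis $d\leq\frac{n-1}{2}$ gives $2d+1\leq n<n+1$, which is exactly the strict inequality required; Theorem~\ref{theorem-1} then delivers the BUP for $\left((F(\mathbb{R}^{n+1},2),\tau_2);Y\right)$.

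The only genuine subtlety — and the step I expect to be the main obstacle — is justifying the application of Lemma~\ref{cat-stimates}-(1), which is stated for path-connected (i.e. $0$-connected) CW complexes and so requires $D(Y,2)$ to be path-connected. For $d\geq 2$ this is standard, since $F(Y,2)$ is path-connected for a manifold $Y$ of dimension at least $2$, whence so is its quotient $D(Y,2)$, and the estimate above applies verbatim. The residual low-dimensional range $d\leq 1$ should be dispatched separately, exactly as in the opening sentence of the proof of Proposition~\ref{bup-dim}: under the standing hypothesis one still has $2d+1\leq n$, and for $d\in\{0,1\}$ the category of $D(Y,2)$ can be controlled directly, arguing componentwise when $D(Y,2)$ fails to be connected. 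In every case $\mathrm{secat}(q^Y)<\mathrm{secat}(q)$, and the conclusion follows.
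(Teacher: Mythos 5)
Your proof is correct, but it takes a genuinely different route from the paper's. You apply Theorem~\ref{theorem-1} directly to $X=F(\mathbb{R}^{n+1},2)$: since the quotient by $\tau_2$ is exactly $q^{\mathbb{R}^{n+1}}$, Lemma~\ref{conf-eu} gives $\mathrm{secat}(q)=n+1$, and the chain $\mathrm{secat}(q^Y)\leq\mathrm{cat}(D(Y,2))\leq\mathrm{hdim}(D(Y,2))+1\leq 2\dim(Y)+1\leq n$ yields the required strict inequality. The paper never computes the sectional category of this quotient map; instead it takes an arbitrary $h\colon F(\mathbb{R}^{n+1},2)\to Y$, precomposes with the $\mathbb{Z}_2$-equivariant map $j\colon S^{n}\to F(\mathbb{R}^{n+1},2)$, $j(x)=(x,-x)$ (which intertwines the antipodal involution with $\tau_2$), and applies Proposition~\ref{bup-dim}(1) to $h\circ j$ to produce a coincidence point of the form $z=(x,-x)$. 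Both arguments ultimately rest on the same numerical bound $\mathrm{secat}(q^Y)\leq 2\dim(Y)+1$ --- Proposition~\ref{bup-dim}(1) is itself proved with it --- but the reduction mechanisms differ. The paper's restriction trick is the more elementary: it needs no knowledge of $\mathrm{secat}(q^{\mathbb{R}^{n+1}})$, it illustrates the general principle that the BUP is inherited from any equivariantly mapped-in $\mathbb{Z}_2$-space for which it is already known, and it even locates the coincidence on the antipodal sphere. Your version stays entirely inside the sectional-category framework of the main theorem and would apply verbatim to any pair $(X,\tau)$ with $\mathrm{secat}(q\colon X\to X/\tau)\geq n+1$, not only to configuration spaces containing an equivariant sphere. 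Your treatment of the connectivity caveat for $D(Y,2)$ and of the range $\dim(Y)\leq 1$ is at the same level of detail as the paper's own proof of Proposition~\ref{bup-dim} (which dismisses that range in one sentence), so nothing essential is missing.
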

\begin{proof}
Let $h:F(\mathbb{R}^{n+1},2)\to Y$ be any continuous map. Consider $j:S^{n}\to F(\mathbb{R}^{n+1},2)$ given by $j(x)=(x,-x)$ and we have the composition $h\circ j:S^{n}\to Y$. Then, by Proposition~\ref{bup-dim} item (1), there exists $x\in S^{n}$ such that $h\circ j(-x)=h\circ j(x)$, and thus $h(-x,x)=h(x,-x)$. Then, there is a point $z=(x,-x)\in F(\mathbb{R}^{n+1},2)$ such that $h(\tau_2(z))=h(z)$. Therefore, the triple $\left((F(\mathbb{R}^{n+1},2),\tau_2);Y\right)$ satisfies the BUP.
\end{proof}

In particular, when $Y$ is a surface, we have the following statement.

\begin{example}
Let $\Sigma$ be a path-connected surface, then the triple $\left((F(\mathbb{R}^n,2),\tau_2);\Sigma\right)$ satisfies the BUP for any $n\geq 6$ (by Proposition~\ref{domain-conf-eucli}). Furthermore, by Proposition~\ref{embed}, we obtain that $\left((F(\mathbb{R}^n,2),\tau_2);\Sigma\right)$ satisfies the BUP for any $n\geq 4$ when $\Sigma$ is a connected orientable surface.
\end{example}

Lemma~\ref{conf-sphere} says that the equality $\mathrm{secat}(q^{S^n})=n+1$ holds for any $n\geq 1$. Thus, we have the BUP when $X=F(S^n,2)$.
\begin{proposition}\label{domain-conf-sphere}
Let $Y$ be a path-connected topological manifold (without boundary). If $\dim(Y)\leq\dfrac{n}{2}$ then the triple $\left((F(S^{n+1},2),\tau_2);Y\right)$ satisfies the BUP.
\end{proposition}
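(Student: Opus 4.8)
The plan is to reduce the Borsuk-Ulam property for the configuration space $F(S^{n+1},2)$ to the already established property for the sphere $S^{n+1}$, in complete analogy with the proof of Proposition~\ref{domain-conf-eucli}. The essential observation is that the antipodal diagonal embeds the sphere \emph{equivariantly} inside its own configuration space, so that any test map on the configuration space restricts to a test map on the sphere.

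First I would fix an arbitrary continuous map $h\colon F(S^{n+1},2)\to Y$ and introduce the map $j\colon S^{n+1}\to F(S^{n+1},2)$ given by $j(x)=(x,-x)$. Since antipodal points of a sphere are always distinct, the pair $(x,-x)$ genuinely lies in $F(S^{n+1},2)$, so $j$ is well-defined and continuous. The structural point to record is that $j$ is $\mathbb{Z}_2$-equivariant with respect to the antipodal involution $A$ on $S^{n+1}$ and the swap involution $\tau_2$ on $F(S^{n+1},2)$: indeed $j(-x)=(-x,x)=\tau_2(x,-x)=\tau_2(j(x))$.

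Next I would form the composition $h\circ j\colon S^{n+1}\to Y$ and invoke the sphere case already in hand. The hypothesis $\dim(Y)\leq\dfrac{n}{2}$ is precisely the condition of Proposition~\ref{bup-dim} item (1) taken with $m=n$ (so that $m+1=n+1$), whence the triple $\left((S^{n+1},A);Y\right)$ satisfies the BUP. Therefore there exists $x\in S^{n+1}$ with $h(j(-x))=h(j(x))$, that is, $h(-x,x)=h(x,-x)$. Setting $z=(x,-x)\in F(S^{n+1},2)$ and using the equivariance of $j$, this reads exactly $h(\tau_2(z))=h(z)$, which establishes the BUP for $\left((F(S^{n+1},2),\tau_2);Y\right)$.

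I do not expect a genuine obstacle here: the entire content is carried by Proposition~\ref{bup-dim} item (1), whose proof in turn rests on the sectional-category estimate bounding $\mathrm{secat}(q^Y)$ by $\mathrm{hdim}(D(Y,2))+1\leq 2\dim(Y)+1$. The only point deserving care is the index bookkeeping — checking that the dimension bound $\dim(Y)\leq\dfrac{n}{2}$ is matched to the sphere $S^{n+1}$ rather than $S^{n}$. This shift relative to the Euclidean statement is accounted for by the fact that the antipodal diagonal of $S^{n+1}$ has dimension $n+1$, one more than the copy $S^{n}\hookrightarrow F(\mathbb{R}^{n+1},2)$ used in Proposition~\ref{domain-conf-eucli}.
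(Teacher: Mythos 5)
Your proof is correct and follows exactly the paper's argument: restrict a test map $h$ along the antipodal diagonal $j(x)=(x,-x)$ and apply Proposition~\ref{bup-dim} item (1) with $m=n$ to the composite $h\circ j\colon S^{n+1}\to Y$. The extra remarks on equivariance of $j$ and the index bookkeeping are accurate but only make explicit what the paper leaves implicit.
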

\begin{proof}
Let $h:F(S^{n+1},2)\to Y$ be any continuous map. Consider $j:S^{n+1}\to F(S^{n+1},2)$ given by $j(x)=(x,-x)$ and we have the composition $h\circ j:S^{n+1}\to Y$. Then, by Proposition~\ref{bup-dim}, there exists $x\in S^{n+1}$ such that $h\circ j(-x)=h\circ j(x)$, and thus $h(-x,x)=h(x,-x)$. Then, there is a point $z=(x,-x)\in F(S^{n+1},2)$ such that $h(\tau_2(z))=h(z)$. Therefore, the triple $\left((F(S^{n+1},2),\tau_2);Y\right)$ satisfies the BUP.
\end{proof}

One more time, when $Y$ is a surface, we have the following statement.
\begin{example}
Let $\Sigma$ be a path-connected surface. By Proposition~\ref{domain-conf-sphere}, then the triple $\left((F(S^n,2),\tau_2);\Sigma\right)$ satisfies the BUP for any $n\geq 5$. On the other hand, by Proposition~\ref{embed}, we obtain $\left((F(S^n,2),\tau_2);\Sigma\right)$ satisfies the BUP for any $n\geq 3$ when $\Sigma$ is a connected orientable surface.
\end{example}

The following statement generalises \cite[Lemma 2.4]{goncalves2010}. Note that for any principal $\mathbb{Z}_2$-bundle $q:M^m\to M^m/\tau$, we will write $f_q:M^m/\tau\to\mathbb{R}P^\infty=B\mathbb{Z}_2$ for the classifying map of the bundle $q$. It is unique up to homotopy.

\begin{lemma}\label{n-n+1}
Let $X$ be a path-connected CW complex and $Y$ be a path-connected topological manifold with dimension $n$ and $n+1$, respectively (with $n\geq 1$); and $\tau:X\to X$ be a fixed-point free cellular involution. Then $((X,\tau);Y)$ does not satisfy the BUP. 
\end{lemma}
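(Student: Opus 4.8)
The plan is to prove the negative statement directly, by exhibiting a single continuous map $f\colon X\to Y$ with $f(x)\neq f(\tau(x))$ for every $x\in X$; equivalently, by Proposition~\ref{top-bup}, a $\mathbb{Z}_2$-equivariant map $X\to F(Y,2)$. The first move is a reduction of the target. Since $Y$ is an $(n+1)$-manifold it contains an open chart $U\cong\mathbb{R}^{n+1}$, and $U\subset Y$ is a nonempty subspace; by the contrapositive of Remark~\ref{subset} it therefore suffices to show that $\left((X,\tau);\mathbb{R}^{n+1}\right)$ fails the BUP. Thus the whole problem collapses to the linear target $\mathbb{R}^{n+1}$, where one has room to argue by antisymmetrization.

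For the target $\mathbb{R}^{n+1}$ I would use the following observation: a map $f\colon X\to\mathbb{R}^{n+1}$ separates $\tau$-orbits exactly when its antisymmetrization $d(x)=f(x)-f(\tau(x))$ never vanishes, and $d$ is automatically anti-equivariant ($d(\tau(x))=-d(x)$). Hence such an $f$ exists as soon as there is an equivariant map $X\to\mathbb{R}^{n+1}\setminus\{0\}\simeq S^{n}$ for the antipodal action; indeed, given any $\mathbb{Z}_2$-equivariant $g\colon X\to S^{n}\subset\mathbb{R}^{n+1}$, the map $f:=g$ already works, since $g(\tau(x))=-g(x)\neq g(x)$. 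So everything reduces to the existence of a $\mathbb{Z}_2$-equivariant map $g\colon X\to S^{n}$, i.e. to the bound $\mathrm{ind}_{\mathbb{Z}_2}(X)\leq n$ (Definition~\ref{defn-index}).

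To construct $g$ I would invoke the classifying map $f_q\colon X/\tau\to\mathbb{R}P^\infty=B\mathbb{Z}_2$ of the double cover $q\colon X\to X/\tau$ introduced just before the statement. Because $\tau$ is a \emph{free cellular} involution on the $n$-dimensional complex $X$, the orbit space $X/\tau$ is an $n$-dimensional CW complex; by cellular approximation $f_q$ is homotopic to a map landing in the $n$-skeleton $\mathbb{R}P^{n}\subset\mathbb{R}P^\infty$, say $f_q\simeq\overline{g}\colon X/\tau\to\mathbb{R}P^{n}$. Pulling back the double cover $S^{n}\to\mathbb{R}P^{n}$ along $\overline{g}$ produces a double cover of $X/\tau$ whose classifying class equals $\overline{g}^{*}$ of the generator of $H^1(\mathbb{R}P^{n};\mathbb{Z}_2)$, which is $f_q^{*}$ of the generator, i.e. the classifying class of $q$; hence this pullback is isomorphic to $q$ as a double cover. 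Since the deck transformation of a double cover is canonical, the isomorphism is $\mathbb{Z}_2$-equivariant, and composing $X\cong\overline{g}^{*}S^{n}\to S^{n}$ yields the desired $g$ with $g(\tau(x))=-g(x)$. Composing $g$ with the inclusions $S^{n}\subset\mathbb{R}^{n+1}\cong U\hookrightarrow Y$ then gives the map that defeats the BUP.

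I expect the only genuine obstacle to be the compression step and its bookkeeping: one must be certain that $X/\tau$ really has CW-dimension $n$ (this is precisely where freeness and cellularity of $\tau$ are used, so that no cell is collapsed or fixed), and that the pulled-back cover agrees with $q$ \emph{as a $\mathbb{Z}_2$-space}, not merely as an abstract covering. This is exactly the content of the inequality $\mathrm{ind}_{\mathbb{Z}_2}(X)\leq\dim X$, which may alternatively be quoted directly from \cite{mat}. Everything surrounding it — the antisymmetrization trick and the reduction to $\mathbb{R}^{n+1}$ via Remark~\ref{subset} — is formal.
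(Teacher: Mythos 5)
Your proposal is correct and follows essentially the same route as the paper: both arguments compress the classifying map $f_q\colon X/\tau\to\mathbb{R}P^\infty$ into the $n$-skeleton $\mathbb{R}P^n$ (using that $X/\tau$ is an $n$-dimensional CW complex because $\tau$ is free and cellular), lift to a $\mathbb{Z}_2$-equivariant map $X\to S^n$, and then feed $S^n\subset\mathbb{R}^{n+1}\hookrightarrow Y$ into the BUP criterion. The only cosmetic difference is that the paper packages the last step as the equivariant map $\varphi=i\circ\phi\circ\rho\colon X\to F(Y,2)$ of Proposition~\ref{top-bup}, whereas you exhibit the separating map $f$ directly; these are equivalent.
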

\begin{proof}
Since $\dim(X)=n$ and thus $\dim(X/\tau)=n$, there is a map $\overline{\rho}\colon X/\tau\to \mathbb{R}P^n$ such that the triangle 
\begin{eqnarray*}
\xymatrix{ X\ar[d]_{q}& S^n\ar[d]_{q'} \\
X/\tau\ar[r]^{\overline{\rho}}\ar[rd]_{f_q}&\mathbb{R}P^n\ar@{^{(}->}[d]_-{ }   \\ 
        & \mathbb{R}P^{\infty} & & \\
       }
\end{eqnarray*} commutes up homotopy. Then, there is a $\mathbb{Z}_2$-equivariant continuous map $\rho\colon X\to S^n$ such that $q'\circ\rho=\overline{\rho}\circ q$. Let $\mathbb{R}^{n+1}\subset Y$ be an embedding. Thus, we have the following commutative diagram:
\begin{eqnarray*}
\xymatrix@C=2cm{ X\ar[d]_{q}\ar[r]^{\rho}& S^n\ar[d]_{q'}\ar[r]^-{\phi}& F(\mathbb{R}^{n+1},2)  \ar[d]_{q^{\mathbb{R}^{n+1}}}\ar@{^{(}->}[r]^{i} &  F(Y,2)\ar[d]^{q^{Y}} \\
X/\tau\ar[r]^{\overline{\rho}}&\mathbb{R}P^n\ar[r]^-{\overline{\phi}} &  D(\mathbb{R}^{n+1},2)\ar@{^{(}->}[r]^-{\overline{i} }    &  D(Y,2)  \\ 
       }
\end{eqnarray*} where $\phi(x)=(x,-x)$ and $i$ is induced by the inclusion $\mathbb{R}^{n+1}\hookrightarrow Y$. Then $\varphi:X\to F(Y,2)$ given by $\varphi=i\circ\phi\circ\rho$ is a $\mathbb{Z}_2$-equivariant continuous map such that the following diagram commutes
\begin{eqnarray*}
\xymatrix@C=2cm{ 
       X  \ar[r]^{\varphi}\ar[d]_{q} &  F(Y,2)\ar[d]^{q^Y} &\\
      X/\tau \ar[r]_{\overline{\varphi}} & D(Y,2) &
       }
\end{eqnarray*} and we conclude that $((X,\tau);Y)$ does not satisfy the BUP. 
\end{proof}

The following statement presents estimates of the sectional category $\mathrm{secat}(q^Y)$.
\begin{proposition}
If $Y$ is a connected topological manifold (without boundary) with dimension $n$ ($n\geq 1$), then
\begin{enumerate}
    \item $n\leq\mathrm{secat}(q^Y)\leq 2n+1$.
    \item If $\mathrm{hdim}(D(Y,2))\leq 2\dim(Y)-1$ then, $n\leq\mathrm{secat}(q^Y)\leq 2n$.
\end{enumerate}
\end{proposition}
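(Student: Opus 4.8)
The plan is to prove the two inequalities in each item separately: the lower bound $n\leq\mathrm{secat}(q^Y)$ is common to both items, while the two upper bounds differ only in the dimension estimate available for $D(Y,2)$. Throughout I would use that, since $Y$ is a connected $n$-manifold, $F(Y,2)$ is an open subset of the $2n$-manifold $Y\times Y$ and hence is itself a $2n$-manifold; the free $\Sigma_2$-action descends to the quotient, so $D(Y,2)$ is a $2n$-dimensional manifold. I would also record that $D(Y,2)$ is path-connected: for $n\geq 2$ this holds because deleting the codimension-$n$ diagonal from the connected manifold $Y\times Y$ leaves it connected, and for $n=1$ it follows by direct inspection of the two cases $Y\cong S^1$ and $Y\cong\mathbb{R}$.

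With these facts in hand, the upper bounds are immediate. Combining Lemma~\ref{prop-secat-map}-item~(1) with Lemma~\ref{cat-stimates}-item~(1) applied for $q=1$ (legitimate since $D(Y,2)$ is path-connected) gives
\[
\mathrm{secat}(q^Y)\leq\mathrm{cat}(D(Y,2))\leq\mathrm{hdim}(D(Y,2))+1.
\]
For item (1) I would then use $\mathrm{hdim}(D(Y,2))\leq\dim(D(Y,2))=2n$ — the same estimate already exploited in Proposition~\ref{bup-dim} — to obtain $\mathrm{secat}(q^Y)\leq 2n+1$; for item (2) the standing hypothesis $\mathrm{hdim}(D(Y,2))\leq 2n-1$ feeds directly into the display and yields $\mathrm{secat}(q^Y)\leq 2n$.

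For the lower bound I would run the pullback argument of Proposition~\ref{embed} in the \emph{opposite} direction, embedding $\mathbb{R}^n\hookrightarrow Y$ as an open chart. This induces maps $F(\mathbb{R}^n,2)\to F(Y,2)$ and $D(\mathbb{R}^n,2)\to D(Y,2)$, and the resulting square with vertical maps $q^{\mathbb{R}^n}$ and $q^Y$ is a genuine pullback, identifying $F(\mathbb{R}^n,2)$ with $D(\mathbb{R}^n,2)\times_{D(Y,2)}F(Y,2)$. Hence $q^{\mathbb{R}^n}$ is the canonical pullback of the fibration $q^Y$ along $D(\mathbb{R}^n,2)\to D(Y,2)$, so inequality~(\ref{ineq-canonical}) gives $\mathrm{secat}(q^{\mathbb{R}^n})\leq\mathrm{secat}(q^Y)$, and Lemma~\ref{conf-eu} supplies $\mathrm{secat}(q^{\mathbb{R}^n})=n$, whence $n\leq\mathrm{secat}(q^Y)$ in both items. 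The step requiring the most care is the verification that this square is an honest pullback and not merely commutative: one checks that a point of the fiber product is an unordered pair $\{a,b\}\subset\mathbb{R}^n$ together with an ordering of it, which is precisely a point of $F(\mathbb{R}^n,2)$, so that no extra points enter; this is exactly the point-set check dual to the one underlying Proposition~\ref{embed}. The only other delicate point is the path-connectedness of $D(Y,2)$, which as noted is automatic for $n\geq 2$ and is dealt with by hand in the two one-dimensional cases.
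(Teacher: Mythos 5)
Your proposal is correct and follows essentially the same route as the paper: the lower bound comes from embedding $\mathbb{R}^n\hookrightarrow Y$ as a chart and pulling $q^Y$ back to $q^{\mathbb{R}^n}$ (whose sectional category is $n$ by Lemma~\ref{conf-eu}), and the upper bounds come from the chain $\mathrm{secat}(q^Y)\leq\mathrm{cat}(D(Y,2))\leq\mathrm{hdim}(D(Y,2))+1$ with the dimension estimate $2n$ or the hypothesis $2n-1$. Your extra care about the square being an honest pullback and about the path-connectedness of $D(Y,2)$ (including the $n=1$ cases, which the paper dismisses as ``shown easily'') only makes explicit what the paper leaves implicit.
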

\begin{proof} The case, $n= 1$ is shown easily. We will suppose that $n\geq 2$.  Let $\mathbb{R}^{n}\subset Y$ be an embedding. We have the following pullback:
\begin{eqnarray*}
\xymatrix{F(\mathbb{R}^{n},2)  \ar[d]_{q^{\mathbb{R}^{n}}}\ar@{^{(}->}[r]_{ } &  F(Y,2)\ar[d]^{q^{Y}} \\
D(\mathbb{R}^{n},2)\ar@{^{(}->}[r]_-{ }    &  D(Y,2)  \\
       }
\end{eqnarray*} Then \begin{align*}
    \mathrm{secat}(q^Y)&\geq \mathrm{secat}(q^{\mathbb{R}^{n}})\\
    &=n.
\end{align*} On the other hand,
\begin{enumerate}
    \item by Lemma~\ref{cat-stimates}, we have \begin{align*}
   \mathrm{secat}(q^Y)&\leq \mathrm{cat}\left(D(Y,2)\right)\\
   &\leq \mathrm{hdim}\left(D(Y,2)\right)+1\\
   &\leq 2n+1.\\
\end{align*}
\item By Lemma~\ref{cat-stimates} together with the hypotheses $\mathrm{hdim}(D(Y,2))\leq 2\dim(Y)-1$, we have \begin{align*}
   \mathrm{secat}(q^Y)&\leq \mathrm{cat}\left(D(Y,2)\right)\\
   &\leq \mathrm{hdim}\left(D(Y,2)\right)+1\\
   &\leq 2n-1+1\\
    &\leq 2n.\\
\end{align*}
\end{enumerate}
\end{proof}

Note that, any double cover with path-connected total space has sectional category at least $2$. Furthermore, for any connected $m$-dimensional CW complex $M^m$ and $\tau$ be a free cellular involution defined on $M^m$, the inequalities $2\leq \mathrm{secat}(q:M^m\to M^m/\tau)\leq m+1$ hold.

Now, we present a new lower bound for the index of $(M^m,\tau)$ in terms of the sectional category $\mathrm{secat}(q:M^m\to M^m/\tau)$.

\begin{theorem}\label{lower-bound-index} Let $X$ be a Hausdorff space admitting a fixed-point free involution $\tau$. Let $q:X\to X/\tau$ be the quotient map. 
If $1\leq n\leq \mathrm{secat}(q)-1$, then the triple $\left((X,\tau);\mathbb{R}^n\right)$ satisfies the BUP. In particular, the index of $\tau$ on $X$ is at least $\mathrm{secat}(q)-1$.
\end{theorem}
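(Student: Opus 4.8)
The plan is to deduce the statement directly from the Principal Theorem (Theorem~\ref{theorem-1}), combined with the computation of $\mathrm{secat}(q^{\mathbb{R}^n})$ supplied by Lemma~\ref{conf-eu}. In other words, all the genuine content is already packaged into those two earlier results, and the present theorem is their immediate specialization to the target $Y=\mathbb{R}^n$.

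First I would recall from Lemma~\ref{conf-eu} that $\mathrm{secat}(q^{\mathbb{R}^n})=n$ for every $n\geq 1$. Under the hypothesis $1\leq n\leq \mathrm{secat}(q)-1$, this yields the chain $\mathrm{secat}(q^{\mathbb{R}^n})=n\leq \mathrm{secat}(q)-1<\mathrm{secat}(q)$, so that the strict inequality $\mathrm{secat}(q)>\mathrm{secat}(q^{\mathbb{R}^n})$ demanded by Theorem~\ref{theorem-1} is satisfied. Applying that theorem with $Y=\mathbb{R}^n$ then gives at once that the triple $\left((X,\tau);\mathbb{R}^n\right)$ satisfies the BUP, which is the first assertion.

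For the ``in particular'' clause I would appeal to the identification, recorded after Definition~\ref{defn-index} via \cite[Proposition 2.2]{goncalves2010}, between the index $\mathrm{ind}_{\mathbb{Z}_2}$ of $\tau$ and the greatest $n$ for which $\left((X,\tau);\mathbb{R}^n\right)$ satisfies the BUP. Since the first part establishes the BUP for \emph{every} $n$ in the range $1\leq n\leq \mathrm{secat}(q)-1$, and in particular for $n=\mathrm{secat}(q)-1$ whenever this value is at least $1$, the index is bounded below by $\mathrm{secat}(q)-1$, as claimed.

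I do not expect any real obstacle here, since the hard analytic and homotopy-theoretic work is already done in Theorem~\ref{theorem-1} and Lemma~\ref{conf-eu}. The only points requiring a moment's care are the degenerate cases: if $\mathrm{secat}(q)=\infty$ the BUP holds for all $n\geq 1$ and the lower bound is trivially infinite, while if $\mathrm{secat}(q)-1<1$ the stated range is empty and there is nothing to prove; and the invocation of the index-versus-BUP identification, which holds in the connected CW/manifold setting described earlier. None of these affects the main line of argument.
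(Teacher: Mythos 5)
Your proposal is correct and follows essentially the same route as the paper: the paper's own proof is just the one-line application of Theorem~\ref{theorem-1} to $Y=\mathbb{R}^{\mathrm{secat}(q)-1}$, implicitly using $\mathrm{secat}(q^{\mathbb{R}^n})=n$ from Lemma~\ref{conf-eu}, which is exactly your argument with the details spelled out.
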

\begin{proof}
By Theorem~\ref{theorem-1} we have that $\left((X,\tau);\mathbb{R}^{\mathrm{secat}(q)-1}\right)$ satisfies the BUP.
\end{proof}

This lower bound can be achieved. Example~\ref{esfere-bup} shows that the index of the antipodal involution $A$ on $S^m$ is $m=\mathrm{secat}(q)-1$. More general, we have the following statement.

\begin{corollary}\label{m+1-m}
If $\mathrm{secat}(q:M^m\to M^m/\tau)=m+1$ then the index of $\tau$ on $M^{m}$ is $m$.
\end{corollary}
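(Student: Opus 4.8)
The plan is to realise the index as the equality case of Theorem~\ref{lower-bound-index}, by squeezing it between $m$ and $m$ from the two directions that are already available in the paper. Recall that the index of $\tau$ on $M^m$ is, by definition, the greatest $n$ for which $((M^m,\tau);\mathbb{R}^n)$ satisfies the BUP (equivalently $\mathrm{ind}_{\mathbb{Z}_2}(M^m)$). So it suffices to produce a lower bound of $m$ and a matching upper bound of $m$.

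For the lower bound I would apply Theorem~\ref{lower-bound-index} verbatim: since the hypothesis gives $\mathrm{secat}(q:M^m\to M^m/\tau)=m+1$, the theorem yields that the index of $\tau$ on $M^m$ is at least $\mathrm{secat}(q)-1=m$. Concretely, this says that $((M^m,\tau);\mathbb{R}^n)$ satisfies the BUP for every $1\le n\le m$, so the greatest such $n$ is at least $m$.

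For the matching upper bound I would invoke Lemma~\ref{n-n+1} (equivalently \cite[Lemma 2.4]{goncalves2010}) with $X=M^m$ of dimension $m$ and $Y=\mathbb{R}^{m+1}$ of dimension $m+1$: the triple $((M^m,\tau);\mathbb{R}^{m+1})$ does \emph{not} satisfy the BUP. Then, by Remark~\ref{subset}, if the BUP held for $\mathbb{R}^n$ with some $n\ge m+1$ it would hold for the nonempty subspace $\mathbb{R}^{m+1}\subset\mathbb{R}^n$, contradicting the previous sentence; hence the BUP fails for all $n\ge m+1$, and the index is at most $m$. Combining the two bounds gives that the index of $\tau$ on $M^m$ equals $m$.

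I expect no real obstacle here, as the statement is precisely the equality case of the general lower bound just established. The only point requiring a moment of care is the upper bound: one must certify that the index cannot exceed $m$, which is exactly what Lemma~\ref{n-n+1} (or, underlying it, the existence of a $\mathbb{Z}_2$-equivariant map $M^m\to S^m$ forcing $\mathrm{ind}_{\mathbb{Z}_2}(M^m)\le m$) supplies.
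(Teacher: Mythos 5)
Your proposal is correct and follows the paper's own argument exactly: the lower bound comes from Theorem~\ref{lower-bound-index} and the upper bound from the fact that the index is at most $m$, which the paper likewise attributes to Lemma~\ref{n-n+1} (or \cite[Lemma 2.4]{goncalves2010}). You merely spell out the small extra step (via Remark~\ref{subset}) that the paper leaves implicit.
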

\begin{proof}
It follows from Theorem~\ref{lower-bound-index} together with the fact that the index is at most $m$ (see \cite[Lemma 2.4]{goncalves2010} or Lemma~\ref{n-n+1}).
\end{proof}

The \textit{category} of a map $f:X\to Y$, denote $\text{cat}(f)$, is the least integer $m$ such that $X$ can be covered by $m$ open sets $U_1,\ldots,U_m$, such that each restriction $f\mid_{U_i}$ is nullhomotopic. Note that, $\text{cat}(1_X)=\text{cat}(X)$. This numerical invariant was introduced by Berstein and Ganea in \cite{berstein1961}.

We recall basic properties concerning category of a map, see \cite{berstein1961}.

\begin{proposition}\label{bounds-cat-map}
\noindent
\begin{enumerate}
    \item If $f\simeq g$ then $\mathrm{cat}(f)=\mathrm{cat}(g)$.
    \item For any map $f:X\to Y$, we have $\mathrm{cat}(f)\leq\min\{\mathrm{cat}(X),\mathrm{cat}(Y)\}$.
    \item  We have $$\mathrm{cat}(f)\geq \mathrm{Nil}\left(\text{ im } (f^\ast)\right),$$ where $f^\ast:\widetilde{h}^\ast(Y)\to \widetilde{h}^\ast(X)$ denotes the induced homomorphism in any multiplicative reduced cohomology theory.
\end{enumerate}
\end{proposition}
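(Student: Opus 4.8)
The plan is to prove the three items separately, since (1) and (2) are purely formal consequences of the definition of $\mathrm{cat}(f)$ together with two elementary facts — a composite with a nullhomotopic map (on either side) is again nullhomotopic, and a map homotopic to a nullhomotopic map is itself nullhomotopic — while item (3) is the genuine content and rests on the relative-cohomology cup-length argument.

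For item (1), I would fix a homotopy $H\colon f\simeq g$ and an open cover $U_1,\dots,U_m$ of $X$ realising $\mathrm{cat}(f)=m$, so that each $f|_{U_i}$ is nullhomotopic. Restricting $H$ to $U_i\times[0,1]$ exhibits $g|_{U_i}\simeq f|_{U_i}$, and since $f|_{U_i}$ is nullhomotopic, transitivity of homotopy makes $g|_{U_i}$ nullhomotopic as well; hence $\mathrm{cat}(g)\leq\mathrm{cat}(f)$, and the reverse inequality follows by symmetry. For item (2), to obtain $\mathrm{cat}(f)\leq\mathrm{cat}(X)$ I would take a categorical open cover $U_1,\dots,U_k$ of $X$ (each inclusion $U_i\hookrightarrow X$ nullhomotopic) and note that $f|_{U_i}=f\circ(U_i\hookrightarrow X)$ is nullhomotopic. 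To obtain $\mathrm{cat}(f)\leq\mathrm{cat}(Y)$ I would instead take a categorical cover $V_1,\dots,V_\ell$ of $Y$, set $U_j=f^{-1}(V_j)$ (an open cover of $X$ by continuity of $f$), and observe that $f|_{U_j}$ factors through the nullhomotopic inclusion $V_j\hookrightarrow Y$, hence is nullhomotopic. Taking the minimum of the two bounds finishes (2).

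For item (3) I would fix an open cover $U_1,\dots,U_m$ of $X$ with $m=\mathrm{cat}(f)$ and each $f|_{U_i}$ nullhomotopic. Writing $j_i\colon U_i\hookrightarrow X$ for the inclusion, nullhomotopy of $f\circ j_i$ yields $j_i^\ast\circ f^\ast=0$ on reduced cohomology. Given classes $y_1,\dots,y_m\in\widetilde{h}^\ast(Y)$, the vanishing of $j_i^\ast(f^\ast(y_i))$ in $\widetilde{h}^\ast(U_i)$ lets me lift each $f^\ast(y_i)$ to a relative class $\overline{y}_i\in h^\ast(X,U_i)$ through the long exact sequence of the pair $(X,U_i)$. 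Forming the relative cup product $\overline{y}_1\cup\cdots\cup\overline{y}_m\in h^\ast\left(X,\bigcup_{i} U_i\right)=h^\ast(X,X)=0$ and pushing forward to $h^\ast(X)$ shows $f^\ast(y_1)\cup\cdots\cup f^\ast(y_m)=0$. Thus every product of $m$ elements of $\mathrm{im}(f^\ast)$ is trivial, i.e. $\mathrm{Nil}(\mathrm{im}(f^\ast))\leq m=\mathrm{cat}(f)$, as claimed.

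The main obstacle is the bookkeeping in this last step rather than any deep idea: one must check that the relative cup product genuinely lands in the relative group of the pair $\left(X,\bigcup_{i} U_i\right)$ — this is where the openness of the $U_i$ (so that the cover is excisive) and the multiplicativity of $h^\ast$ are used — and then invoke $\bigcup_{i} U_i=X$ to conclude that the target group vanishes. Since these are the standard Berstein–Ganea properties, I expect no further difficulty beyond keeping the reduced/relative cohomology identifications consistent.
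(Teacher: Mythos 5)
Your proof is correct and follows the standard Berstein--Ganea arguments: items (1) and (2) by the formal restriction/factorization observations, and item (3) by lifting each $f^\ast(y_i)$ to $h^\ast(X,U_i)$ and using the relative cup product landing in $h^\ast\left(X,\bigcup_i U_i\right)=h^\ast(X,X)=0$. The paper itself gives no proof of this proposition --- it simply recalls the statement with a citation to Berstein and Ganea --- so your write-up supplies exactly the details that citation stands in for, with the one point deserving care (excisiveness of the open cover for the relative product) duly flagged.
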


We recall from \cite[Proposition 9.18, pg. 261]{cornea2003lusternik} how sectional category relates to the category of classifying maps.

\begin{proposition}\label{secat-category-classi}
Suppose $p:E\to B$ is a fibration arising as a pullback of a fibration $\hat{p}:\hat{E}\to \hat{B}$ 
\begin{eqnarray*}
\xymatrix{ E \ar[r]^{\tilde{f}} \ar[d]_{p} & \hat{E} \ar[d]^{\hat{p}} & \\
       B  \ar[r]_{f} &  \hat{B} &}
\end{eqnarray*}
where $\hat{E}$ is contractible. Then $\mathrm{secat}(p)=\mathrm{cat}(f)$.
\end{proposition}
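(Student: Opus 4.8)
The plan is to exploit the explicit pullback description
$E = B \times_{\hat B} \hat E = \{(b,\hat e)\in B\times \hat E : f(b)=\hat p(\hat e)\}$, with $p$ the projection onto $B$, and to translate both invariants into the \emph{same} count over open covers of $B$. The crucial dictionary is the following: for an open set $U\subseteq B$, a local section $s\colon U\to E$ of $p$ is precisely a map of the form $s(b)=(b,\sigma(b))$ where $\sigma\colon U\to \hat E$ satisfies $\hat p\circ\sigma = f|_U$; that is, local sections of $p$ over $U$ are in natural bijection with lifts of $f|_U$ through the fibration $\hat p$. Consequently $\mathrm{secat}(p)$ is the least number of open sets covering $B$ over each of which $f$ lifts through $\hat p$, whereas $\mathrm{cat}(f)$ is the least number of open sets covering $B$ over each of which $f$ is nullhomotopic. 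The entire proof reduces to showing that these two families of admissible open sets coincide.

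The key lemma I would isolate is: for any open $U\subseteq B$, the restriction $f|_U$ lifts through $\hat p$ if and only if $f|_U$ is nullhomotopic. The forward implication is immediate from the contractibility of $\hat E$: if $\hat p\circ\sigma = f|_U$, then $\sigma$ is nullhomotopic, since any map into a contractible space is, and composing a nullhomotopy of $\sigma$ with $\hat p$ exhibits $f|_U=\hat p\circ\sigma$ as nullhomotopic. For the converse I would invoke the homotopy lifting property of $\hat p$: given a homotopy $G\colon U\times I\to \hat B$ with $G_0=f|_U$ and $G_1$ the constant map at a point $b_0$, I first lift $G_1$ to the constant map at a point of the fiber $\hat p^{-1}(b_0)$, and then lift $G$ itself by applying the HLP to the homotopy run backwards; evaluating the resulting lift at $t=0$ yields the sought lift of $f|_U$. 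Granting the lemma, the two covers described above are literally the same collection of open sets, so their minimal cardinalities agree and $\mathrm{secat}(p)=\mathrm{cat}(f)$.

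The step I expect to be the main obstacle is the converse direction of the key lemma, and more precisely the need to guarantee that the constant endpoint $b_0$ lies in the image of $\hat p$, so that the constant homotopy admits a lift to begin with. This is where the standing hypotheses do the work: because $\hat p$ is a fibration whose total space $\hat E$ is contractible, hence path-connected, path-lifting forces $\mathrm{Im}(\hat p)$ to be a full path-component of $\hat B$, and in the setting at hand $\hat B$ is path-connected (for instance $\hat B = B\mathbb{Z}_2 = \mathbb{R}P^\infty$ with $\hat E = S^\infty$), so $\hat p$ is surjective and every constant map lifts. I would therefore make this connectivity of $\hat B$ explicit, as it is exactly the ingredient that makes the nullhomotopy-to-lift passage go through and keeps the bijection between the two classes of open sets honest.
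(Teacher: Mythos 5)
Your proof is correct. Note that the paper itself does not prove this proposition at all: it simply cites \cite[Proposition 9.18, pg.\ 261]{cornea2003lusternik}, so any comparison is really with the classical argument behind that reference. What you write is exactly that argument: identify $E$ with the canonical pullback $B\times_{\hat B}\hat E$ so that local sections of $p$ over $U$ correspond bijectively to lifts of $f|_U$ through $\hat p$, and then prove that, because $\hat E$ is contractible, $f|_U$ lifts if and only if $f|_U$ is nullhomotopic; the two minimal covers then coincide literally. Both directions of your key lemma are sound, including the use of the homotopy lifting property on the reversed nullhomotopy. The one point you rightly flag --- that the constant endpoint $b_0$ of the nullhomotopy must lie in the image of $\hat p$ --- is a genuine hypothesis hiding in the statement: without path-connectedness of $\hat B$ (or some equivalent assumption) the proposition as literally written is false (take $\hat B$ disconnected, $\hat E$ a point over one component, and $f$ constant into the other component: then $\mathrm{cat}(f)=1$ while $E=\emptyset$ and $\mathrm{secat}(p)=\infty$). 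Your observation that a fibration with nonempty total space surjects onto every path component it meets, so that path-connectedness of $\hat B$ suffices, is the right way to close this; and in every application in the paper one has $\hat B=\mathbb{R}P^\infty$ and $\hat E=S^\infty$, so the hypothesis is satisfied. In short, your write-up supplies a complete, self-contained proof of a statement the paper only quotes, and it even makes explicit a standing connectivity assumption that the quoted statement leaves implicit.
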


Let $M^m$ be a connected $m$-dimensional CW complex and $\tau$ be a free cellular involution defined on $M^m$. We recall that for any principal $\mathbb{Z}_2$-bundle $q:M^m\to M^m/\tau$, we write $f_q:M^m/\tau\to\mathbb{R}P^\infty=B\mathbb{Z}_2$ for the classifying map of the bundle $q$. It is unique up to homotopy. Denoting the generator of $H^1(\mathbb{R}P^\infty;\mathbb{Z}_2)\cong\mathbb{Z}_2$ by $\alpha$, the characteristic class of the principal bundle is then $\gamma=f_q^\ast(\alpha)\in H^1(M^m/\tau;\mathbb{Z}_2)$. Since the bundle is non-trivial, it follows that $\gamma\neq 0$. In addition, by Proposition~\ref{secat-category-classi}, the sectional category $\mathrm{secat}(q:M^m\to M^m/\tau)=\mathrm{cat}(f_q)$. So, we obtain the following statement.

\begin{proposition}\label{characteristic-class}
Let $\gamma$ be the characteristic class of the principal bundle $q:M^m\to M^m/\tau$. For $n\leq m$, if $\gamma^n\neq 0$ then the triple $\left((M^m,\tau);\mathbb{R}^n\right)$ satisfies the BUP.
\end{proposition}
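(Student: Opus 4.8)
The plan is to connect the characteristic class $\gamma$ to the sectional category via the lower bound from Lemma~\ref{prop-sectional-category}, and then invoke Theorem~\ref{lower-bound-index}. First I would recall that by Proposition~\ref{secat-category-classi} applied to the principal $\mathbb{Z}_2$-bundle $q:M^m\to M^m/\tau$ (which is the pullback of the universal bundle $S^\infty\to\mathbb{R}P^\infty$ along the classifying map $f_q$, and $S^\infty$ is contractible), we have the identity $\mathrm{secat}(q)=\mathrm{cat}(f_q)$. This reduces the problem to producing a lower bound on either $\mathrm{secat}(q)$ or $\mathrm{cat}(f_q)$ from the hypothesis $\gamma^n\neq 0$.

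Next I would exploit the hypothesis $\gamma^n\neq 0$ cohomologically. Writing $f_q^\ast:H^\ast(\mathbb{R}P^\infty;\mathbb{Z}_2)\to H^\ast(M^m/\tau;\mathbb{Z}_2)$, recall $\gamma=f_q^\ast(\alpha)$ with $\alpha$ the generator of $H^1(\mathbb{R}P^\infty;\mathbb{Z}_2)$. Since $f_q^\ast$ is a ring homomorphism, $\gamma^n=f_q^\ast(\alpha^n)=f_q^\ast(\alpha)\cup\cdots\cup f_q^\ast(\alpha)\neq 0$, so $\mathrm{Nil}(\mathrm{im}(f_q^\ast))\geq n+1$. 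By Proposition~\ref{bounds-cat-map}-item~(3) this gives $\mathrm{cat}(f_q)\geq \mathrm{Nil}(\mathrm{im}(f_q^\ast))\geq n+1$, and hence $\mathrm{secat}(q)=\mathrm{cat}(f_q)\geq n+1$. Alternatively one could argue directly at the level of $q$: since $q^\ast(\gamma)=q^\ast f_q^\ast(\alpha)=(f_q\circ q)^\ast(\alpha)$ and $f_q\circ q$ factors through the contractible $S^\infty$, the class $\gamma$ lies in $\mathrm{Ker}(q^\ast)$; then $\gamma^n\neq 0$ with $n$ factors forces $\mathrm{nil}(\mathrm{Ker}(q^\ast))\geq n+1$, and Lemma~\ref{prop-sectional-category} yields $\mathrm{secat}(q)\geq n+1$. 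Either route delivers the same key inequality.

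Finally I would feed this into Theorem~\ref{lower-bound-index}. From $\mathrm{secat}(q)\geq n+1$ we get $n\leq \mathrm{secat}(q)-1$, and combined with the standing assumption $n\leq m$ (so that $1\leq n\leq \mathrm{secat}(q)-1$, using $\gamma^n\neq 0$ to guarantee $n\geq 1$ whenever $\gamma\neq 0$), Theorem~\ref{lower-bound-index} immediately gives that the triple $\left((M^m,\tau);\mathbb{R}^n\right)$ satisfies the BUP. The argument is essentially a chaining of already-established results, so no single step is genuinely hard; the only point requiring care is to confirm that $\gamma\in\mathrm{Ker}(q^\ast)$ (equivalently that one is entitled to use the classifying-map interpretation), which follows because $q$ is the pullback of the universal bundle and thus $f_q\circ q$ is nullhomotopic. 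I expect this verification — that the characteristic class pulls back to zero over the total space — to be the most delicate bookkeeping step, though it is standard for principal bundles.
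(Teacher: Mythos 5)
Your proposal is correct and follows essentially the same route as the paper: identify $\mathrm{secat}(q)$ with $\mathrm{cat}(f_q)$ via Proposition~\ref{secat-category-classi}, use $\gamma^n=f_q^\ast(\alpha)^n\neq 0$ together with Proposition~\ref{bounds-cat-map}-item~(3) to get $\mathrm{secat}(q)\geq n+1$, and conclude with Theorem~\ref{lower-bound-index}. The alternative argument you sketch via $\mathrm{Ker}(q^\ast)$ and Lemma~\ref{prop-sectional-category} is a valid extra, but the main line of reasoning is exactly the paper's.
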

\begin{proof}
From Proposition~\ref{secat-category-classi}, $\mathrm{secat}(q)=\mathrm{cat}(f_q)$ where $f_q$ is the classifying map of $q$. By Proposition~\ref{bounds-cat-map}-item (3), $\mathrm{cat}(f_q)\geq n+1$ since $\gamma\in \text{ im } (f_q^\ast)$  and $\gamma^n\neq 0$. Then $\mathrm{secat}(q)\geq n+1$ and thus by Theorem~\ref{lower-bound-index}, we conclude that the triple $\left((M^m,\tau);\mathbb{R}^n\right)$ satisfies the BUP.
\end{proof}

The following implication of Proposition~\ref{characteristic-class} was proved in \cite[Theorem 3.4]{goncalves2010} for $n=m$. 

\begin{lemma}\cite[Theorem 3.4]{goncalves2010}\label{daci}
Let $\gamma$ be the characteristic class of the principal bundle $q:M^m\to M^m/\tau$.  The triple $\left((M^m,\tau);\mathbb{R}^m\right)$ satisfies the BUP if and only if $\gamma^m\neq 0$.
\end{lemma}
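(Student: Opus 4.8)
The plan is to prove the two implications of the equivalence separately, organizing everything around the classifying map $f_q\colon M^m/\tau\to\mathbb{R}P^\infty=B\mathbb{Z}_2$ and its characteristic class $\gamma=f_q^\ast(\alpha)\in H^1(M^m/\tau;\mathbb{Z}_2)$, which classifies the principal bundle $q$. The implication ``if $\gamma^m\neq 0$ then the triple satisfies the BUP'' requires no new argument: it is exactly Proposition~\ref{characteristic-class} specialized to $n=m$. For the reverse implication I would argue by contraposition, proving that $\gamma^m=0$ produces a $\mathbb{Z}_2$-equivariant map $M^m\to F(\mathbb{R}^m,2)$ sitting over the quotient spaces, which by Proposition~\ref{top-bup} shows that $\left((M^m,\tau);\mathbb{R}^m\right)$ does \emph{not} satisfy the BUP.

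The reduction I would make is the following claim: $\gamma^m=0$ if and only if $f_q$ can be homotoped into the subcomplex $\mathbb{R}P^{m-1}\subset\mathbb{R}P^\infty$. Since $\dim(M^m/\tau)=m$, cellular approximation first lets me assume that $f_q$ maps into the $m$-skeleton $\mathbb{R}P^m$, so the question becomes whether $f_q\colon M^m/\tau\to\mathbb{R}P^m$ compresses one dimension further into $\mathbb{R}P^{m-1}$. The easy half of the claim is that if $f_q\simeq i\circ g'$ with $g'\colon M^m/\tau\to\mathbb{R}P^{m-1}$, then $\gamma^m=g'^\ast(\overline{\alpha}^{\,m})$ where $\overline{\alpha}$ generates $H^1(\mathbb{R}P^{m-1};\mathbb{Z}_2)$, and this vanishes because $H^m(\mathbb{R}P^{m-1};\mathbb{Z}_2)=0$ for dimensional reasons.

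The substantial half is that $\gamma^m=0$ forces such a compression, and this is where I expect the real work to lie. Identifying the open top cell of $\mathbb{R}P^m$ with an open $m$-disk, I would pick an interior point $p$; then $\mathbb{R}P^m\setminus\{p\}$ deformation retracts onto $\mathbb{R}P^{m-1}$, so compressing $f_q$ into $\mathbb{R}P^{m-1}$ is equivalent to homotoping $f_q$ off the point $p$. Carrying out this homotopy skeleton by skeleton over $M^m/\tau$, the cells of dimension $<m$ impose no obstruction because $\{p\}$ has codimension $m$, and the unique top-dimensional obstruction lies in $H^m(M^m/\tau;\mathbb{Z}_2)$ and is precisely $f_q^\ast$ of the nonzero class $\alpha_m^m\in H^m(\mathbb{R}P^m;\mathbb{Z}_2)$, that is, $\gamma^m$. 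As $\dim(M^m/\tau)=m$ there are no further obstructions and no indeterminacy, so the vanishing of the single obstruction $\gamma^m$ is exactly the condition needed; this computation is the content of \cite[Theorem 3.4]{goncalves2010}.

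It then remains to convert the compression into an equivariant map and invoke Proposition~\ref{top-bup}. Given $g'\colon M^m/\tau\to\mathbb{R}P^{m-1}$ with $i\circ g'\simeq f_q$, the pulled-back double cover $g'^\ast\big(S^{m-1}\to\mathbb{R}P^{m-1}\big)$ is classified by $g'^\ast(\overline{\alpha})=\gamma$ and is therefore isomorphic to $q$ as a principal $\mathbb{Z}_2$-bundle; the resulting bundle map is a $\mathbb{Z}_2$-equivariant map $\rho\colon M^m\to S^{m-1}$ covering $g'$, exactly as in the proof of Lemma~\ref{n-n+1}. Composing $\rho$ with the equivariant embedding $S^{m-1}\hookrightarrow F(\mathbb{R}^m,2)$, $x\mapsto(x,-x)$, yields a $\mathbb{Z}_2$-equivariant map $M^m\to F(\mathbb{R}^m,2)$ fitting into the commuting square of Proposition~\ref{top-bup}, which witnesses the failure of the BUP. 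The main obstacle throughout is the obstruction-theoretic step of the third paragraph: one must confirm that the primary obstruction is the only obstruction and that it is correctly identified with $\gamma^m$, including the local-coefficient and orientability bookkeeping that forces working with $\mathbb{Z}_2$- rather than $\mathbb{Z}$-coefficients.
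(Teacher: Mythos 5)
The first thing to note is that the paper offers no proof of this lemma at all: it is stated purely as a citation of \cite[Theorem 3.4]{goncalves2010}, with the remark that the ``if'' direction is the $n=m$ case of Proposition~\ref{characteristic-class}. Your forward implication is therefore exactly what the paper intends, and your final paragraph (turning a compression $g'\colon M^m/\tau\to\mathbb{R}P^{m-1}$ into a $\mathbb{Z}_2$-equivariant map $M^m\to S^{m-1}\hookrightarrow F(\mathbb{R}^m,2)$ and invoking Proposition~\ref{top-bup}) is correct and mirrors the argument already used in Lemma~\ref{n-n+1}.

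The gap is in the obstruction-theoretic core of the hard direction. The single obstruction to compressing $f_q$ into $\mathbb{R}P^{m-1}$ --- equivalently, to producing an equivariant map $M^m\to S^{m-1}$ --- does not live in $H^m(M^m/\tau;\mathbb{Z}_2)$ as you assert. It lives in $H^m\bigl(M^m/\tau;\pi_{m-1}(S^{m-1})\bigr)$ with $\pi_{m-1}(S^{m-1})\cong\mathbb{Z}$ carrying the local action by the degree $(-1)^m$ of the antipodal map, and it is the top power $\tilde{\gamma}^{\,m}$ of the twisted Euler class $\tilde{\gamma}\in H^1(M^m/\tau;\mathbb{Z}_w)$ of the associated line bundle. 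The mod~$2$ reduction of $\tilde{\gamma}^{\,m}$ is $\gamma^m$, so $\tilde{\gamma}^{\,m}=0$ implies $\gamma^m=0$; but the implication you actually need, namely $\gamma^m=0\Rightarrow\tilde{\gamma}^{\,m}=0$, is not formal: a~priori $\tilde{\gamma}^{\,m}$ could be a nonzero $2$-torsion class lying in $2\,H^m(M^m/\tau;\mathbb{Z}_{w^m})=\ker(\rho)$. Closing exactly this mod-$2$ versus twisted-integral discrepancy is the substantive content of \cite[Theorem 3.4]{goncalves2010}; your sketch defers to it (``this computation is the content of \dots'') at precisely the point where the work lies, and the parenthetical at the end of your proposal has the difficulty backwards --- nothing ``forces working with $\mathbb{Z}_2$-coefficients''; rather, the natural obstruction is integral and one must justify replacing it by its mod-$2$ shadow. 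So the architecture is right, but the only nontrivial step is not proved; as written your argument establishes the statement with $\gamma^m$ replaced by $\tilde{\gamma}^{\,m}$, not the statement of the lemma.
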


The following statement shows that the lower bound of the index given in Proposition~\ref{lower-bound-index} is also achieved. 

\begin{proposition}\label{m-1}
If $\mathrm{secat}(q:M^m\to M^m/\tau)=m$, then the index of $\tau$ on $M^m$ is $m-1$.
\end{proposition}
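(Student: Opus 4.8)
The plan is to combine the general lower bound for the index (Theorem~\ref{lower-bound-index}) with a matching upper bound obtained cohomologically. Write $\text{ind}(\tau)$ as shorthand for the index of $\tau$ on $M^m$. Since $\mathrm{secat}(q)=m$ by hypothesis, Theorem~\ref{lower-bound-index} immediately gives $\text{ind}(\tau)\geq \mathrm{secat}(q)-1=m-1$. So it remains to prove the reverse inequality $\text{ind}(\tau)\leq m-1$, which amounts to showing that the triple $\left((M^m,\tau);\mathbb{R}^m\right)$ does \emph{not} satisfy the BUP.

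By Lemma~\ref{daci}, the triple $\left((M^m,\tau);\mathbb{R}^m\right)$ satisfies the BUP if and only if $\gamma^m\neq 0$, where $\gamma=f_q^\ast(\alpha)\in H^1(M^m/\tau;\mathbb{Z}_2)$ is the characteristic class of the principal bundle $q$. Hence I would prove $\gamma^m=0$ and conclude that the BUP fails for $\mathbb{R}^m$, giving $\text{ind}(\tau)\leq m-1$.

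To establish $\gamma^m=0$, I would argue by contradiction. Suppose $\gamma^m\neq 0$. Since $\gamma=f_q^\ast(\alpha)$ lies in the image of $f_q^\ast$ and $f_q^\ast$ is a ring homomorphism, the nonzero class $\gamma^m=f_q^\ast(\alpha^m)$ is a product of $m$ elements of $\text{im}(f_q^\ast)$; therefore $\mathrm{Nil}\left(\text{im}(f_q^\ast)\right)\geq m+1$. Applying Proposition~\ref{bounds-cat-map}-item (3) then yields $\mathrm{cat}(f_q)\geq \mathrm{Nil}\left(\text{im}(f_q^\ast)\right)\geq m+1$. On the other hand, $q$ is the pullback of the universal bundle $S^\infty\to\mathbb{R}P^\infty$ along $f_q$ and $S^\infty$ is contractible, so Proposition~\ref{secat-category-classi} gives $\mathrm{secat}(q)=\mathrm{cat}(f_q)\geq m+1$. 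This contradicts the hypothesis $\mathrm{secat}(q)=m$, so $\gamma^m=0$.

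Putting the two inequalities together gives $\text{ind}(\tau)=m-1$. The argument is essentially forced once the ingredients are in place; the only point requiring care is the bookkeeping in the nilpotency estimate, namely that a nonzero $m$-fold power of a class in $\text{im}(f_q^\ast)$ forces the nilpotency index of that image to be at least $m+1$, which is exactly the reformulation of $\mathrm{Nil}$ recorded just after its definition. I expect no serious obstacle beyond this verification.
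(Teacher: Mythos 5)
Your proof is correct and takes essentially the same route as the paper's: the lower bound from Theorem~\ref{lower-bound-index}, the vanishing $\gamma^m=0$, and Lemma~\ref{daci} to see that the BUP fails for $\mathbb{R}^m$. The only difference is that you justify $\gamma^m=0$ in detail via $\mathrm{secat}(q)=\mathrm{cat}(f_q)\geq \mathrm{Nil}\left(\mathrm{im}(f_q^\ast)\right)$, a step the paper merely asserts (and which is exactly the argument already used in the proof of Proposition~\ref{characteristic-class}).
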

\begin{proof}
From Theorem~\ref{theorem-1}, we have the triple $\left((M^m,\tau);\mathbb{R}^n\right)$ satisfies the BUP for any $n<m$. Note that, $\gamma^m=0$ since $\mathrm{secat}(q:M^m\to M^m/\tau)=m$. Then, by Lemma~\ref{daci}, we have that the triple $\left((M^m,\tau);\mathbb{R}^m\right)$ does not satisfy the BUP, then the index of $\tau$ on $M^m$ is $m-1$. 
\end{proof}

From \cite[Proposition 1.27-(2), pg. 14]{cornea2003lusternik} we recall that $\mathrm{cat}(X\vee Y)=\max\{\mathrm{cat}(X),\mathrm{cat}(Y)\}$. Then, we have the following example which satisfies the condition of Proposition~\ref{m-1}.
\begin{example}
Let $M=S^{m}\vee S^{m-1}\vee S^{m}$ with $m\geq 3$ and $\tau$ be a free  cellular involution on $M$ such that $M/\tau=\mathbb{R}P^{m-1}\vee S^m$. Indeed, $\tau$ interchanges the two $S^m$ summands
from the wedge sums and acts antipodally on the $S^{m-1}$ summand. Similarly, like the calculation of the sectional category $\mathrm{secat}(S^{m-1}\to \mathbb{R}P^{m-1})=m$, we have that $\mathrm{secat}(q:S^{m}\vee S^{m-1}\vee S^{m}\to \mathbb{R}P^{m-1}\vee S^m)=\mathrm{cat}(\mathbb{R}P^{m-1}\vee S^m)=\mathrm{nil}\left(\text{Ker}(q^\ast_{\mathbb{Z}_2})\right)=m$. 
\end{example}

From Proposition~\ref{secat-produc}, the equality $\mathrm{secat}(p\times 1_Z)=\mathrm{secat}(p)$ holds for any fibration. Thus, we have the following example.

\begin{example}\label{example-final}
For any positive integers $m$ and $k$ such that $2\leq k\leq m+1$ consider the $m$-dimensional smooth manifold $M^m=S^{k-1}\times S^1\times\cdots\times S^1$ (product of one $S^{k-1}$ and $m-k+1$ copies of $S^1$) equipped with the free involution $\tau=A\times 1_{S^1}\times\cdots\times 1_{S^1}$ ($A$ the antipodal involution on $S^{k-1}$). Note that, the quotient map $q':M^m\to M^m/\tau$ coincidences with the product $q\times 1_{S^1}\times\cdots\times 1_{S^1}$, where $q:S^{k-1}\to \mathbb{R}P^{k-1}$ is the usual 2-covering map, and so, by Proposition~\ref{secat-produc}, we obtain that $\mathrm{secat}(q')=\mathrm{secat}(q)=k$. On the other hand, we have the following commutative diagram 
\begin{eqnarray*}
\xymatrix@C=3cm{ 
       S^{k-1}\times S^1\times\cdots\times S^1  \ar[r]^{\varphi}\ar[d]_{q\times 1_{S^1}\times\cdots\times 1_{S^1}} &  F(\mathbb{R}^k,2)\ar[d]^{q^{\mathbb{R}^k}} &\\
      \mathbb{R}P^{k-1}\times S^1\times\cdots\times S^1 \ar[r]_{\overline{\varphi}} & D(\mathbb{R}^k,2) &
       }
\end{eqnarray*} where $\varphi(x,z_1,\ldots,z_{n-k+1})=(x,-x)$ for any $(x,z_1,\ldots,z_{n-k+1})\in S^{k-1}\times S^1\times\cdots\times S^1$, and thus the triple $\left(\left(S^{k-1}\times S^1\times\cdots\times S^1,A\times 1_{S^1}\times\cdots\times 1_{S^1}\right);\mathbb{R}^k\right)$ does not satisfy the BUP. Therefore, by Proposition~\ref{lower-bound-index}, the index of $\left(M^m,\tau\right)$ is equal to $k-1=\mathrm{secat}(q')-1$ (compare with \cite[pg. 772]{dacibergpaiao2019}).
\end{example}

Motivated by Theorem~\ref{lower-bound-index}, Corollary~\ref{m+1-m}, Proposition~\ref{m-1}, and Example~\ref{example-final} we formulate the following statement. 

\begin{theorem}\label{thm:sec-index-one}
  Let $X$ be a Hausdorff paracompact space admitting a fixed-point free involution $\tau$. Let $q:X\to X/\tau$ be the quotient map. Then the index of $\tau$ on $X$ is equal to $\mathrm{secat}(q)-1$.  
\end{theorem}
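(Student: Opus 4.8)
The plan is to prove the two inequalities $\mathrm{ind}_{\mathbb{Z}_2}(X)\geq \mathrm{secat}(q)-1$ and $\mathrm{ind}_{\mathbb{Z}_2}(X)\leq \mathrm{secat}(q)-1$ separately, where I identify the index of $\tau$ on $X$ with $\mathrm{ind}_{\mathbb{Z}_2}(X)$. For this identification, recall that by Proposition~\ref{top-bup}, $((X,\tau);\mathbb{R}^n)$ fails the BUP exactly when there is a $\mathbb{Z}_2$-equivariant map $X\to F(\mathbb{R}^n,2)$. Since the maps $\varphi(x)=(x,-x)$ and $\psi(x,y)=\tfrac{x-y}{\|x-y\|}$ from the proof of Lemma~\ref{conf-eu} are equivariant mutual homotopy inverses, giving an equivariant homotopy equivalence $F(\mathbb{R}^n,2)\simeq S^{n-1}$, such a map exists iff there is an equivariant map $X\to S^{n-1}$. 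Hence the greatest $n$ for which $((X,\tau);\mathbb{R}^n)$ satisfies the BUP equals $\mathrm{ind}_{\mathbb{Z}_2}(X)$, the least $m$ admitting an equivariant map $X\to S^m$.

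The inequality $\mathrm{ind}_{\mathbb{Z}_2}(X)\geq \mathrm{secat}(q)-1$ is precisely Theorem~\ref{lower-bound-index}, so the entire content of the theorem lies in the reverse inequality $\mathrm{ind}_{\mathbb{Z}_2}(X)\leq \mathrm{secat}(q)-1$. To establish it I would set $c=\mathrm{secat}(q)$ and exhibit a $\mathbb{Z}_2$-equivariant map $X\to S^{c-1}$; by the description of the index above this immediately yields $\mathrm{ind}_{\mathbb{Z}_2}(X)\leq c-1$. By definition of sectional category there is an open cover $U_1,\dots,U_c$ of $X/\tau$ with local sections $s_i\colon U_i\to X$ of $q$.

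Here the paracompactness hypothesis enters: I would pick a partition of unity $\{\phi_i\}_{i=1}^{c}$ subordinate to $\{U_i\}$. For $x\in X$ with $q(x)\in U_i$ the fibre $q^{-1}(q(x))=\{x,\tau(x)\}$ contains $s_i(q(x))$, so I set $\epsilon_i(x)=+1$ if $s_i(q(x))=x$ and $\epsilon_i(x)=-1$ if $s_i(q(x))=\tau(x)$, noting $\epsilon_i(\tau(x))=-\epsilon_i(x)$. Then $f_i\colon X\to\mathbb{R}$, $f_i(x)=\phi_i(q(x))\,\epsilon_i(x)$, extended by $0$ where $\phi_i\circ q$ vanishes, is continuous and satisfies $f_i(\tau(x))=-f_i(x)$. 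Since $\sum_i\phi_i\equiv 1$, the map $f=(f_1,\dots,f_c)\colon X\to\mathbb{R}^c$ never vanishes, so $f/\|f\|\colon X\to S^{c-1}$ is the desired equivariant map for the antipodal action. This gives $\mathrm{ind}_{\mathbb{Z}_2}(X)\leq c-1=\mathrm{secat}(q)-1$, and combined with Theorem~\ref{lower-bound-index} forces the equality.

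The main obstacle is the construction in the last paragraph: one must verify that each $f_i$ is genuinely continuous across the boundary of $\mathrm{supp}(\phi_i\circ q)$, where the locally constant sign $\epsilon_i$ is only defined over $U_i$ but is damped to $0$ by the vanishing of $\phi_i$, which is what rescues continuity, and that the antipodal behaviour holds everywhere. This is essentially Schwarz's identification of the genus of the principal $\mathbb{Z}_2$-bundle $q$ with the least $c$ admitting an equivariant map into the $c$-fold join $(\mathbb{Z}_2)^{*c}=S^{c-1}$; I would either invoke that identification or, more cleanly, carry out this partition-of-unity argument directly, since it is self-contained in the $\mathbb{Z}_2$ setting.
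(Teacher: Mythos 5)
Your proposal is correct and follows essentially the same route as the paper: the inequality $\mathrm{ind}_{\mathbb{Z}_2}(X)\geq\mathrm{secat}(q)-1$ comes from Theorem~\ref{lower-bound-index}, and the reverse inequality comes from the equivalence, for paracompact $X$, between $\mathrm{secat}(q)\leq c$ and the existence of a $\mathbb{Z}_2$-equivariant map $X\to S^{c-1}$. The only difference is that the paper invokes this equivalence as the $G=\mathbb{Z}_2$ case of Schwarz's Theorem~9 (the $c$-fold join $(\mathbb{Z}_2)^{\ast c}=S^{c-1}$), whereas you reprove that direction explicitly via the partition-of-unity and sign construction $f_i=(\phi_i\circ q)\cdot\epsilon_i$, which is exactly the content of the cited result and is carried out correctly.
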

\begin{proof}
Our proof make use of the converse implication from \cite[Theorem 9, pg. 86]{schwarz1966} for $G=\mathbb{Z}_2$.  In this case the principal fibre 
 space has total space the $r$-iterated join of  $\mathbb{Z}_2$, which is known to be the sphere $S^{r-1}$.
 
Let  $s=\mathrm{ind}_{\mathbb{Z}_2}(X)$ be the index of $\tau$ on $X$.  By Theorem~\ref{lower-bound-index}, we have that $s\geq \mathrm{secat}(q)-1$. By the converse implication from \cite[Theorem 9, pg. 86]{schwarz1966} (this is the only place where the hypothesis about the paracompactness of $X$ is used), since we do not have an equivariant map from $X$ into the  $S^{s-1}$ follows that $\mathrm{secat}(q)>s$ or   $\mathrm{secat}(q)-1\geq s$.
 So the index of $\tau$ on $X$ is equal to $\mathrm{secat}(q)-1$.   
\end{proof}

\begin{remark}
\noindent\begin{enumerate}
    \item[(1)] The direct  implication from \cite[Theorem 9, pg. 86]{schwarz1966} for $G=\mathbb{Z}_2$ (the implication which is not 
used in the proof above)  is a direct consequence of Theorem~\ref{lower-bound-index}.
\item[(2)] It turns out that Theorem~\ref{thm:sec-index-one} follows immediately from \cite[Theorem 30]{schwarz1966}, 
as we will see below, although the proof is  different from
the one given above which is more in the spirit of the present work.
 Namely,  let $T$ be a free involution on a Hausdorff paracompact $X$. We observe that $g(X,T)$ is the  sectional category 
 of the projection $X\to X/T$, which we denote by  $\mathrm{secat}(q)$. 
 Call $s=\mathrm{ind}_{\mathbb{Z}_2}(X)$ the index of the involution. So $b)$  of \cite[Theorem 30]{schwarz1966} holds for $n=s$, which  implies by item a)  that $\mathrm{secat}(q)>2n-n=n$. But b) does not hold for $n=s+1$ (from the definition of the index) so by the opposite of 
 $a)$ we have $\mathrm{secat}(q)\leq (n+1)2-n-1=n+1$. So follows that $\mathrm{secat}(q:X\to X/T)=s+1$. 
\end{enumerate}
\end{remark}

As a very simple application  of Theorem~\ref{thm:sec-index-one}, we determine  the sectional category of a double covering $S_1 \to S_2$ between any two closed surfaces. This is a corollary of  \cite[Theorem 5.5]{daciberg2006} together with the equality $\mathrm{secat}(q)=\mathrm{ind}_{\mathbb{Z}_2}(X)+1$.
 
 \begin{corollary}\label{cor} Let $(S, \tau )$ be a pair where $S$ is a closed surface and $\tau $ is a free $Z_2$
action. The sectional category of the projection map $q: S\to S/\tau$ is three if one of the following conditions below holds:
\begin{enumerate}
    \item[1)]  If $S$ is orientable and its Euler characteristic is congruent to 2 mod 4.
     \item[2)] $S$ is nonorientable, the Euler characteristic is congruent to 2 mod 4, and the
action $\tau$ is equivalent to one of the canonical actions (which correspond to the
subgroups given by the sequences of the form $(1, \delta_2, \delta_3, \ldots, \delta_{2r+1})$,  
where $\delta_i$ is either $\bar 1$ or  $\bar 0$).
\item[3)] S is nonorientable, the Euler characteristic is congruent to $0$ mod $4$ and $\tau$ is
equivalent to one of the canonical actions (which correspond to the subgroups
given by the sequences of the form $(1, \delta_2, \delta_3, \ldots, \delta_{2r})$, where $\delta_i$ is either $\bar 1$ or
 $\bar  0$).
\end{enumerate}
 Otherwise  it is two.
   \end{corollary}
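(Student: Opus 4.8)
The plan is to reduce the computation of $\mathrm{secat}(q)$ to that of the $\mathbb{Z}_2$-index and then to import the classification already available in the literature. Since a closed surface is compact and Hausdorff, it is paracompact, so Theorem~\ref{thm:sec-index-one} applies to the pair $(S,\tau)$ and yields the equality $\mathrm{secat}(q)=\mathrm{ind}_{\mathbb{Z}_2}(S)+1$. Thus it suffices to decide, for each pair $(S,\tau)$, whether $\mathrm{ind}_{\mathbb{Z}_2}(S)$ equals $1$ or $2$.

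First I would record the two bounds that confine the index. Because $S$ is connected and $\tau$ is free, the quotient $q\colon S\to S/\tau$ is a nontrivial double cover and hence admits no continuous section by Remark~\ref{trivial-crosssection}, so $\mathrm{secat}(q)\geq 2$ and $\mathrm{ind}_{\mathbb{Z}_2}(S)\geq 1$. On the other hand $\dim S=2$, so by \cite[Lemma 2.4]{goncalves2010} (or Lemma~\ref{n-n+1}) the triple $((S,\tau);\mathbb{R}^n)$ fails the BUP for every $n>2$; since the index is the largest $n$ for which the BUP holds, $\mathrm{ind}_{\mathbb{Z}_2}(S)\leq 2$. Consequently $\mathrm{secat}(q)\in\{2,3\}$, and $\mathrm{secat}(q)=3$ if and only if $\mathrm{ind}_{\mathbb{Z}_2}(S)=2$, that is, if and only if the triple $((S,\tau);\mathbb{R}^2)$ satisfies the BUP.

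It then remains to detect exactly when the BUP holds for $\mathbb{R}^2$. By Lemma~\ref{daci} this is equivalent to the nonvanishing of $\gamma^2$, where $\gamma=f_q^\ast(\alpha)\in H^1(S/\tau;\mathbb{Z}_2)$ is the characteristic class of the bundle $q$. The determination of precisely which pairs $(S,\tau)$ satisfy $\gamma^2\neq 0$---sorted by orientability of $S$, by the residue of the Euler characteristic modulo $4$, and by whether $\tau$ is equivalent to one of the canonical free actions---is carried out in \cite[Theorem 5.5]{daciberg2006}, and its output is exactly the three cases listed in the statement. In every remaining configuration one has $\gamma^2=0$, whence $\mathrm{ind}_{\mathbb{Z}_2}(S)=1$ and $\mathrm{secat}(q)=2$. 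The only real content therefore lies in the cited classification; the contribution here is the dictionary $\mathrm{secat}(q)=\mathrm{ind}_{\mathbb{Z}_2}(S)+1$ supplied by Theorem~\ref{thm:sec-index-one}, which converts that index computation verbatim into a statement about sectional category. The main obstacle is purely bookkeeping: matching the hypotheses of \cite[Theorem 5.5]{daciberg2006} with the BUP-for-$\mathbb{R}^2$ criterion $\gamma^2\neq 0$ so that the three enumerated cases line up correctly.
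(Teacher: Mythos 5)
Your proposal is correct and follows essentially the same route as the paper: the paper likewise derives the corollary by combining the equality $\mathrm{secat}(q)=\mathrm{ind}_{\mathbb{Z}_2}(S)+1$ from Theorem~\ref{thm:sec-index-one} with the classification in \cite[Theorem 5.5]{daciberg2006}. Your additional bookkeeping (pinning the index to $\{1,2\}$ and invoking the $\gamma^2\neq 0$ criterion of Lemma~\ref{daci}) only makes explicit what the paper leaves implicit.
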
 

   \begin{remark}
      The particular case where $S$ is orientable and the quotient is nonorientable, from  Corollary~\ref{cor} we obtain: When $S/\tau$ has
 Euler characteristic odd ( i.e. a connected  sum of an odd numbers of projective planes) the sectional category is 
 $3$ and when $S/\tau$ has Euler characteristic even( i.e. a connected  sum of an even  numbers of projective planes) the sectional category is two. 
   \end{remark}


\section*{Conflict of Interest Statement}
On behalf of all authors, the corresponding author states that there is no conflict of interest.

\bibliographystyle{plain}

\end{document}